\newtheorem{theorem}{Theorem}
\newtheorem{lemma}[theorem]{Lemma}
\newtheorem{corollary}[theorem]{Corollary}
\newtheorem{proposition}[theorem]{Proposition}
\newtheorem{obs}[theorem]{Observation} \newtheorem{defi}[theorem]{Definition}
\newtheorem{exa}[theorem]{Example}
\newtheorem{rem}[theorem]{Remark}
\newenvironment{remark}{\begin{rem}\rm}{\end{rem}}
\newtheorem{rems}[theorem]{Remarks}
\newtheorem{ack}[theorem]{Acknowlegment}
\def\n{\noindent}
\def\H{\mathcal H}
\def\K{\mathcal K}
\def\M{\mathcal M}
\def\A{\mathcal A}
\def\B{\mathcal B}
\def\O{\mathcal O}
\def\ZZ{{\mathbf Z}}
\def\CCC{{\mathbf C}}
\def\RRR{{\mathbf R}}
\def\QQ{\mathbf Q}
\def\FF{\mathbf F}
\def\RR+{{\mathbf R}^*}
\def\KK{\mathbf K}
\def\GG{\mathbf G}
\def\PP{\mathbf P}
\def\Un{\mathbf 1}
\def\Q_p{{\mathbf Q}_p}
\def\tous{\qquad\text{for all}\quad}
\def\Aut{{\rm Aut}}
\newcommand{\SL}{\operatorname{SL}}
\newcommand{\GL}{\operatorname{GL}}
\newcommand{\PGL}{\operatorname{PGL}}
\newcommand{\Rep}{\operatorname{Rep}}
\newcommand{\Char}{\operatorname{Char}}
\newcommand{\Ind}{\operatorname{Ind}}
\newcommand{\Comm}{\operatorname{Comm}}
\begin{document}

\title[Infinite characters]{Infinite characters on $GL_n(\QQ)$, 
on  $\SL_n(\ZZ),$  and  on groups acting on trees}

\address{Bachir Bekka \\ Univ Rennes \\ CNRS, IRMAR--UMR 6625\\
Campus Beaulieu\\ F-35042  Rennes Cedex\\
 France}
\email{bachir.bekka@univ-rennes1.fr}
\author{Bachir Bekka}

\thanks{The author acknowledges the support  by the ANR (French Agence Nationale de la Recherche)
through the projects Labex Lebesgue (ANR-11-LABX-0020-01) and GAMME (ANR-14-CE25-0004)}
\begin{abstract}
Answering  a question of J. Rosenberg from \cite{Rose--89}, we construct the first examples of infinite characters on $\GL_n(\KK)$ for a global field $\KK$ and  $n\geq 2.$ 
The case $n=2$ is deduced from the following more general result. Let $G$ a   non amenable countable subgroup acting on locally finite tree $X$. Assume   either  that  the stabilizer in $G$ of every vertex of $X$ is finite or that
the closure of  the image of $G$ in $\Aut(X)$ is not amenable.
We show  that  $G$  has  uncountably many infinite dimensional irreducible unitary representations $(\pi, \H)$ 
of $G$ which are  traceable, that is, such that  the $C^*$-subalgebra of $\B(\H)$ generated by $\pi(G)$ contains the algebra  of the compact operators on $\H.$ 
In the case $n\geq 3,$  we prove the existence of infinitely many characters for  $G=\GL_n(R)$, where $n\geq 3$ and $R$  is  an integral domain  such that $G$ is not amenable. In particular, the group $\SL_n(\ZZ)$ has infinitely many such characters
for $n\geq 2.$

\end{abstract}

\maketitle
\section{Introduction}
\label{S0}
Let $G$ be a  countable discrete group and $\widehat G$  the unitary dual of $G$, that is, the set of
equivalence classes of irreducible unitary representations of $G$.  
The space  $\widehat G$, equipped with
a natural Borel  structure, is  a standard Borel space exactly when  $G$ is   virtually abelian,
by results of Glimm and Thoma (see \cite{Glim--61} and \cite{Thom--68}). 
So, unless $G$ is   virtually abelian (in which case 
the representation theory of $G$ is well understood),  a description of $\widehat G$ is hopeless or useless.
There are at least two other dual objects of $G$, which seem to be more accessible  than $\widehat G$:
\begin{itemize}

\item \textbf{Thoma's dual space} $E(G)$, that is, the set  of indecomposable
positive definite central functions on $G;$ 
\item the space $\Char(G)$ of \textbf{characters} of $G,$ that is, the space of 
 lower semi-continuous semi-finite (not necessarily finite)  traces $t$ on the  \textbf{maximal $C^*$-algebra}  $C^*(G)$
 of $G$  (see Subsection~\ref{SS1}) which satisfies the following extremality condition: 
every lower semi-continuous semi-finite trace on $C^*(G)$ dominated by $t$  on the ideal of definition of $t$ is proportional to $t$.
\end{itemize}
\par
The space $\Char(G)$  parametrizes the quasi-equivalence classes of factorial  representations  of $C^*(G)$
which are \textbf{traceable} ; recall that a unitary representation $\pi$ is factorial if the von Neumann algebra $\M$ generated by $\pi(G)$ is  a factor and that a factorial representation $\pi$ is traceable if there exists a faithful normal  (not necessarily finite) trace $\tau$ on $\M$ 
and a positive element $x\in C^*(G)$  such that $0<\tau(\pi(x))<+\infty.$
If this is the case, then $t=\tau\circ \pi$ belongs to  $\Char(G)$.
Conversely, every  element of $\Char(G)$ is obtained in this way.
Traceable representations are also called \emph{normal} representations.

Two traceable factorial representations  $\pi_1$  and $\pi_2$
are quasi-equivalent if there exists an isomorphism $\Phi: \M_1 \to \M_2$ 
such that $\Phi(\pi_1(g)) = \pi_2(g)$ for all $g \in G$,  where  $\M_i$ is the factor   generated by $\pi_i(G).$ 

Observe that an \emph{irreducible} unitary representation $(\pi, \H)$ of $G$ is traceable 
 if and only if $\pi(C^*(G))$ contains the algebra of compact operators on $\H$.
 The character associated to such a representation is given by the usual trace on $\B(\H)$ 
 and so does not belong to $E(G)$ whenever $\H$ is infinite dimensional; in this case,
 the  character is said to be of type $I_\infty,$ in accordance with the type classification
 of von Neumann algebras.
 Observe also that two  irreducible traceable representations of a group $G$ are quasi-equivalent if and only if they are unitarily equivalent.

Thoma's dual space $E(G)$  is a subspace of $\Char(G)$ and classifies the quasi-equivalence classes of the factorial representations $\pi$ of $C^*(G)$ for which  the factor  $\M$ generated by $\pi(G)$ is \emph{finite}, that is, such that the trace $\tau$ on $\M$  takes only finite values (for more detail on all of this, see Chapters 6 and 17 in \cite{Dixm--C*}).
 \par

Thoma's dual space  $E(G)$ was determined for several examples of countable groups $G$,
among them  $G=GL_n(\KK)$ or $G=SL_n(\KK)$ for an infinite field $\KK$ and $n\geq 2$ (\cite{Kirillov}; see also  \cite{PeTh--16}),
and $G=SL_n(\ZZ)$ for $n\geq 3$ (\cite{Bekk--07});
a procedure  is given in \cite[Proposition 3]{Howe--77} to compute $E(G)$ when $G$ is a nilpotent  finitely generated group.

The space $\Char(G)$ has been described for some \emph{amenable} groups $G$:  
\begin{itemize}
\item  when $G$ is nilpotent, we have $E(G)=\Char(G)$  (see \cite[Theorem 2.1]{CaMo--84});  
\item the space $\Char(G)$ is determined in \cite{Guic--63}
 for the Baumslag-Solitar  group $BS(1,2)$ and in \cite{Vershik} for the infinite symmetric group; 
 \item for $G=GL_n(\KK)$ and $n\geq 2$,
it is shown   in  \cite{Rose--89}  that $E(G)=\Char(G)$  in the case where $\KK$ an algebraic  extension of a finite field.
(Observe that $GL_n(\KK)$ is amenable  if and only if  $\KK$ an algebraic extension of a finite field; see Proposition 9 in \cite{HoRo--89} or Proposition~\ref{Pro-AmenableSLn} below.)
  \end{itemize}

  J. Rosenberg asked in  \cite[Remark after Th\'eor\`eme 1]{Rose--89} whether there exists 
 an \emph{infinite} character on $G=\GL_n(\KK)$, that is, whether $\Char(G)\neq E(G)$,
  for a field $\KK$   which is not an algebraic extension of a finite field. 
 We will show below that   the answer to this question is positive, by exhibiting  as far we know
  the first examples of such characters. 
 The case where $n=2$ and $\KK$ is a global field (see below) will  be deduced from a general result concerning groups acting on trees,  which we now state.

\medskip
Recall that   a graph $X$ is locally finite if every vertex on $X$  has only finitely many neighbours. 
In this case, the group $\Aut(X)$  of automorphisms of $X,$ equipped with the topology of pointwise convergence,
is a locally compact group for which  the vertex stabilizers are compact.
Concerning the notion of weakly equivalent representations, see Chapters 3 and 18 in \cite{Dixm--C*}  (see also Section~\ref{SS1}).

\begin{theorem}
 \label{Theorem1}
Let $X$ be a  tree and $G$ a countable subgroup acting on $X$. Assume that 
\begin{itemize}
\item[(a)] either $G$ is not amenable and the stabilizer in $G$ of every vertex of $X$ is finite, or
\item[(b)] $X$ is locally finite and the closure of  the image of $G$ in $\Aut(X)$ is not amenable.
\end{itemize}
 There exists an uncountable  family  $(\pi_t)_t$ of irreducible unitary representations of $G$ with the following properties:
  $\pi_t$ is infinite dimensional, is traceable and  is not weakly equivalent to $\pi_{t'}$ for $t'\neq t.$
  \end{theorem}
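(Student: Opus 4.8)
The plan is to produce, in both cases, an uncountable family of induced (or boundary-type) unitary representations of $G$ adapted to the tree, and then to verify separately that they are irreducible, infinite-dimensional, traceable, and pairwise weakly inequivalent. A useful preliminary observation is that the last property is essentially automatic once the others are in hand: if $\pi$ is an irreducible representation of $G$ with $\pi(C^*(G))\supseteq\K(\H)$, then $C^*(G)/\Ker\pi$ is a $C^*$-algebra acting irreducibly on $\H$ and containing $\K(\H)$ as an essential ideal, so $\pi$ is the unique irreducible representation of $G$ with kernel $\Ker\pi$; hence $\pi_t\not\cong\pi_{t'}$ already forces $\Ker\pi_t\neq\Ker\pi_{t'}$, i.e.\ $\pi_t$ is not weakly equivalent to $\pi_{t'}$. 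So it suffices to arrange that the $\pi_t$ are irreducible, infinite-dimensional, traceable, and pairwise \emph{non-isomorphic}, the last being detected by a spectral invariant (the spectrum of a distinguished operator $\pi_t(g)$, or a Hecke eigenvalue).

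First I would carry out the geometric reductions. Since $G$ is non-amenable in case (a), and its closure $\overline G$ in $\Aut(X)$ is non-amenable in case (b), in either case $G$ fixes no end and no pair of ends of $X$; so $X$ contains a minimal $G$-invariant subtree on which $G$ acts minimally and non-elementarily, still locally finite in case (b), with the same finiteness (resp.\ compactness) of vertex stabilizers. Replacing $X$ by this subtree we may assume the action is minimal and non-elementary; in case (b) we may moreover assume $\overline G$ is non-discrete, since otherwise $\overline G=G$ is discrete with finite vertex stabilizers and we are back in case (a).

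In case (b) the representations will be obtained by restriction from $\overline G$, which (being the Schlichting completion of $G$ with respect to the commensurated subgroup $G_v$) is a non-amenable, non-discrete, totally disconnected locally compact group acting minimally and non-elementarily on the locally finite tree $X$. Such a group carries an uncountable family of infinite-dimensional irreducible representations that contain the compacts --- for instance the tempered spherical representations, or the special (Steinberg-type) representation and its relatives --- attached to its action on $X$ and on $\partial X$; this is where structure theory of non-amenable closed subgroups of $\Aut(X)$ enters. Writing $\widetilde\pi_t$ for such a representation and $\pi_t:=\widetilde\pi_t|_G$, irreducibility is preserved because $\pi_t(G)$ is strongly dense in $\widetilde\pi_t(\overline G)$ and hence has the same (trivial) commutant. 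In case (a), where no such completion is available, I would instead build the representations by hand: the non-elementary action supplies hyperbolic elements and a free subgroup, and using them one constructs, for each $t$ in an uncountable parameter set, a unitary representation on $\ell^2(\ZZ)$ (or on $\ell^2$ of a suitable orbit) in which one distinguished group element acts as a bilateral shift and another as a diagonal unitary whose eigenvalues accumulate without repetition, so that the generated $C^*$-algebra contains a rank-one projection, hence all of $\K(\H)$, while the commutant is reduced to the scalars; one then promotes this representation of a finite-index free normal subgroup to $G$ by induction and Clifford theory (and handles the non-finitely-generated case by an inductive-limit argument).

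The traceability of $\pi_t$ is, in case (a), built into the construction (the rank-one projection is there by design), and in case (b) follows from the $\widetilde\pi_t$ being CCR --- provided one checks that the rank-one operator witnessing this can be approximated \emph{in norm} by elements of $\pi_t(\CCC[G])$; this is the delicate point, since restriction to a dense subgroup does not preserve traceability in general, and it must be deduced from the explicit form of the $\widetilde\pi_t$ together with the density of $G$ in $\overline G$ and the tree geometry. The main obstacle is thus two-fold: in case (b), the existence of the uncountable CCR family for an \emph{arbitrary} non-amenable non-discrete closed subgroup of $\Aut(X)$, and the transfer of traceability to the dense subgroup $G$; in case (a), establishing irreducibility of the hand-built representations and treating groups that are not finitely generated. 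Everything else --- the reductions, the commutant computations, and the passage from non-isomorphism to non-weak-equivalence --- is comparatively routine.
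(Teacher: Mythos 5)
Your preliminary observation is correct (for irreducible traceable representations, equal kernels force unitary equivalence, so weak inequivalence follows from non-isomorphism), and the geometric reductions are harmless, but the core of the proposal has genuine gaps in both cases. In case (b) you assert that an arbitrary non-amenable, non-discrete closed subgroup $\overline{G}\leq \Aut(X)$ carries an uncountable family of infinite-dimensional irreducible representations containing the compacts (tempered spherical, special representations, etc.). No such general result is available: spherical and special representation theory for groups acting on trees requires strong transitivity-type hypotheses (full automorphism groups of homogeneous trees, boundary-transitive closed subgroups, rank-one groups over local fields), whereas here $\overline{G}$ is only assumed non-amenable; it need not be type I, need not act transitively on $X^0$ or on the boundary, and the existence of even one infinite-dimensional traceable irreducible representation is precisely what must be proved. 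Moreover, even granting such representations of $\overline{G}$, you yourself flag the transfer of traceability to the dense subgroup $G$ --- i.e., that a non-zero compact operator lies in the norm closure of $\pi_t(\CCC[G])$ --- as ``the delicate point'' and offer no argument for it; since that transfer is the heart of the theorem in case (b), the proof stops exactly where the difficulty begins.

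In case (a) the construction is not viable as described: you induce from ``a finite-index free normal subgroup'', but a countable group acting on a tree with finite vertex stabilizers is only a fundamental group of a graph of finite groups, and if its finite subgroups have unbounded order (or it is infinitely generated) it has no torsion-free finite-index subgroup at all (a torsion-free subgroup of index $n$ forces every finite subgroup to have order dividing $n$), so Clifford theory does not apply; the promised ``inductive-limit argument'' is not an argument, since irreducibility and traceability do not pass to inductive limits in any evident way, and you would still have to show that uncountably many of the irreducible constituents obtained after inducing remain infinite dimensional, traceable and pairwise inequivalent as representations of $G$ rather than of the free subgroup. For comparison, the paper handles both cases by a single mechanism you do not use: the Julg--Valette deformation $(\pi_t)_{t\in[0,1]}$ of the natural representation of $\GG$ on $\ell^2(X^0)$, for which $\pi_t(g)-\pi_0(g)$ is finite rank and depends norm-continuously on $t$. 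Non-amenability (via Proposition~\ref{Pro-RestRegRep} in case (b)) produces $a\in C^*(G)$ with $\pi_0(a)=0$ and $1_G(a)\neq 0$, so $\pi_t(a)=\pi_t(a)-\pi_0(a)$ is a non-zero compact operator for $t$ close to $1$; Lemma~\ref{lem-IrredCompactOp} extracts an irreducible subrepresentation $\sigma_t$ with $\Vert\sigma_t(a)\Vert=\Vert\pi_t(a)\Vert$, which is infinite dimensional because $\pi_t$ is a $C_0$-representation of the properly acting non-compact group $\GG$; and the intermediate value theorem applied to $t\mapsto \Vert\pi_t(a)\Vert$ gives uncountably many distinct norms, hence pairwise non weakly equivalent $\sigma_t$. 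Without these (or equivalent) ideas, the two obstacles you single out remain unresolved and the proposal does not constitute a proof.
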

  
   Recall that a global field is a finite  extension of either the field $\QQ$ of rational numbers or of the field $\FF_p(T)$ of rational
 functions in $T$ over the finite field $\FF_p$ (see Chapter III in \cite{Weil}).

 \begin{corollary}
 \label{Cor-Theorem1}
 Let $G$ be either
\begin{itemize}
\item[(i)]  $\GL_2(\KK)$ or $\SL_2(\KK)$ for  a global field $\KK$, or
\item[(ii)]  $\SL_2(\ZZ)$, or 
\item[(iii)] $F_n$, the  free  non abelian group  over $n\in \{2,\dots, +\infty\}$ generators.
\end{itemize}
There exists an uncountable family   $(\pi_t)_t$ of unitary representations of $G$ with the properties from Theorem~\ref{Theorem1}; moreover, in case $G=F_n,$ the representations $\pi_t$ are all faithful.
 \end{corollary}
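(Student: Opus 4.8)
The plan is to derive each case from Theorem~\ref{Theorem1} by exhibiting a suitable action on a tree. For case (iii), the group $F_n$ acts freely on its Cayley graph with respect to a free generating set, which is a locally finite tree when $n<\infty$; since $F_n$ is non-amenable and all vertex stabilizers are trivial (hence finite), hypothesis (a) of Theorem~\ref{Theorem1} applies, and freeness of the action forces every nontrivial $\pi_t$ to be faithful provided the construction in Theorem~\ref{Theorem1} produces representations that are nontrivial on $G$ (which it does, being infinite dimensional). For $n=+\infty$ one works instead with the standard action of $F_\infty$ on a (non-locally-finite) tree with trivial vertex stabilizers, again invoking (a). For case (ii), $\SL_2(\ZZ)$ is a lattice in $\SL_2(\RRR)$, is non-amenable, and acts on the Bruhat--Tits tree $X$ of $\SL_2(\QQ_p)$ for any prime $p$ (via the embedding $\SL_2(\ZZ)\hookrightarrow\SL_2(\QQ_p)$); here $X$ is locally finite of degree $p+1$, and the closure of the image of $\SL_2(\ZZ)$ in $\Aut(X)$ contains $\SL_2(\ZZ_p)$, in fact equals an open subgroup of $\PGL_2(\QQ_p)$ acting on $X$, which is non-amenable, so hypothesis (b) applies. (Alternatively one uses $\SL_2(\ZZ)\cong \ZZ/4\ZZ *_{\ZZ/2\ZZ}\ZZ/6\ZZ$ and Bass--Serre theory to get an action on a tree with finite vertex stabilizers, invoking (a).)

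For case (i), I would treat separately the characteristic-zero and positive-characteristic global fields. In both cases $\GL_2(\KK)$ and $\SL_2(\KK)$ are non-amenable (Proposition~\ref{Pro-AmenableSLn}), so it suffices to produce a tree action satisfying (a) or (b). When $\KK$ is a number field, choose a non-archimedean place $v$; then $\KK$ embeds densely in its completion $\KK_v$, a non-archimedean local field, and $\SL_2(\KK)$ acts on the Bruhat--Tits tree $X_v$ of $\SL_2(\KK_v)$, which is locally finite (regular of degree $q_v+1$ where $q_v$ is the residue cardinality). The closure of the image of $\SL_2(\KK)$ in $\Aut(X_v)$ contains (the image of) $\SL_2(\O_v)$ and is in fact an open, hence non-amenable, subgroup of the automorphism group; so hypothesis (b) holds. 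For $\GL_2(\KK)$ one uses the same action through the determinant-preserving reduction or passes to the $\PGL_2$ tree action, which $\GL_2(\KK)$ factors through up to the center. When $\KK$ is a function field, i.e.\ a finite extension of $\FF_p(T)$, one picks a place $v$ of $\KK$ and runs the identical argument with the Bruhat--Tits tree of $\SL_2(\KK_v)$.

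The main technical point to verify carefully is hypothesis (b): namely that the closure $\overline{G}$ of the image of $G$ in $\Aut(X_v)$ is non-amenable. This follows because $\overline{G}$ is a closed subgroup of $\Aut(X_v)$ containing the image of $\SL_2(\KK)$, which acts on $X_v$ with dense image on the boundary and generates (together with a vertex stabilizer) an open subgroup; any open subgroup of $\PGL_2(\KK_v)$ (or of $\Aut(X_v)$ acting with bounded quotient) is non-amenable since $\PGL_2(\KK_v)$ is non-amenable. An entirely parallel, and perhaps cleaner, route is to note that $\SL_2(\KK)$ always contains a non-abelian free subgroup acting freely on a subtree, but that does not immediately give hypothesis (a) for $G$ itself because vertex stabilizers in $G=\SL_2(\KK)$ (e.g.\ $\SL_2(\O_v)$) are infinite; hence the passage through (b) via the density/openness argument is the essential step, and it is where the bulk of the verification lies. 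The faithfulness claim for $F_n$ requires only observing that the action is free, so the kernel of $\pi_t$ is a normal subgroup acting trivially, which must be $\{e\}$ once $\pi_t$ is shown to be nontrivial.
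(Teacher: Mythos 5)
Your treatment of cases (i) and (iii) matches the paper's: for a global field one takes the Bruhat--Tits tree $X_v$ of a completion $\KK_v$ at a non-archimedean place, where the density of $\KK$ in $\KK_v$ gives that the closure of the image of $G$ in $\Aut(X_v)$ is (the image of) $\PGL_2(\KK_v)$, hence non-amenable, so hypothesis (b) applies; and $F_n$ acts freely on a tree, so hypothesis (a) applies.

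However, your primary argument for case (ii) is wrong. The image of $\SL_2(\ZZ)$ in $\SL_2(\QQ_p)$ lies inside $\SL_2(\ZZ_p)$, which is the stabilizer of the standard vertex of the Bruhat--Tits tree; so the whole group fixes a vertex, its closure in $\Aut(X)$ is contained in a \emph{compact} (hence amenable) group --- indeed the closure of $\SL_2(\ZZ)$ in $\SL_2(\QQ_p)$ is exactly $\SL_2(\ZZ_p)$, not an open subgroup of $\PGL_2(\QQ_p)$. Thus hypothesis (b) fails for this action (and (a) fails too, since the vertex stabilizer is all of $\SL_2(\ZZ)$). This is precisely the point where the global-field case and the $\ZZ$ case diverge: $\KK$ is dense in $\KK_v$, but $\ZZ$ is not dense in $\QQ_p$. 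Your parenthetical alternative --- the Bass--Serre tree of the amalgam $\ZZ/4\ZZ \ast_{\ZZ/2\ZZ} \ZZ/6\ZZ$, which has finite vertex stabilizers so that hypothesis (a) applies --- is correct and is the route the paper takes; it should be promoted to the actual argument. A second, smaller gap: for the faithfulness of the representations in case (iii), you argue that the kernel ``acts trivially'' on the tree, but the representations produced by Theorem~\ref{Theorem1} are irreducible \emph{subrepresentations} $\sigma_t$ of the Julg--Valette representations $\pi_t$, and an element in $\ker\sigma_t$ need not lie in $\ker\pi_t$. The paper's argument is instead that for a free action the $\sigma_t$ (for $t\neq 1$) are $C_0$-representations; since any nontrivial normal subgroup of $F_n$ is infinite, a nontrivial kernel would produce infinitely many group elements with matrix coefficient equal to $1$, contradicting the $C_0$ property.
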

 
\medskip 
 Turning to the case $n\geq 3,$  we prove a result for $G=\GL_n(R)$ or $G=\SL_n(R)$,
valid for every integral domain $R$ such  that $G$ is not amenable.
   \begin{theorem}
 \label{Theorem2}
 Let $R$ be a countable unital commutative ring which is an integral domain; in case the characteristic of $R$ is positive, assume that  the field of fractions of $R$ is not   an algebraic extension of  its prime field.
 For $n\geq 3,$ let  $G=\GL_n(R)$ or $G=\SL_n(R)$.
 There exists  an  infinite dimensional irreducible unitary representation  of $G$  which is  traceable. 
 \end{theorem}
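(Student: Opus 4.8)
The plan is to realise the representation as a quasi-regular representation $\ell^2(G/H)$ for a well-chosen amenable subgroup $H\subseteq G$, to deduce irreducibility from a commensurator computation, and to establish traceability by producing explicitly a non-zero compact operator in $\pi(C^*(G))$; the arithmetic hypothesis on $R$ intervenes only at this last step.

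\smallskip
\emph{Irreducibility.} Take for $H$ the subgroup of upper triangular matrices of $G$, that is, $H=B(R)$ where $B$ is a Borel subgroup of $\SL_n$ (resp.\ of $\GL_n$). Then $H$ is solvable, hence amenable, and since $G$ is not amenable this forces $[G:H]=\infty$; in particular $\pi:=\lambda_{G/H}$ on $\ell^2(G/H)$ is infinite dimensional. The commutant of a quasi-regular representation is the weak closure of its Hecke algebra, so $\pi$ is irreducible provided $H$ is self-commensurating in $G$, i.e.\ $\Comm_G(H)=H$. For this, note that $G$ non-amenable forces $R$, hence $K:=\operatorname{Frac}(R)$, to be infinite, so the group $U(R)$ of $R$-points of the maximal unipotent subgroup $U\subseteq B$ is Zariski dense in $U$. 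If $g\in\SL_n(K)$ commensurates $H\supseteq U(R)$ then $gU(R)g^{-1}$ is commensurable with $U(R)$; commensurable subgroups of $\SL_n(K)$ have the same Zariski identity component, and $U,\,gUg^{-1}$ are connected, so $gUg^{-1}=U$, whence $g\in N_{\SL_n(K)}(U)=B(K)$. Intersecting with $G$ gives $\Comm_G(H)\subseteq B(K)\cap G=H$.

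\smallskip
\emph{A valuation and a proximal element.} By hypothesis $K$ is not an algebraic extension of its prime field (this is automatic in characteristic $0$, where $\QQ\subseteq K$), so $K$ admits a nontrivial valuation $v$: in characteristic $0$, extend a $p$-adic valuation of $\QQ$; in characteristic $p>0$, the hypothesis provides an element $t\in K$ transcendental over $\FF_p$, and one extends a valuation of $\FF_p(t)$ to $K$ by Chevalley's extension theorem. Let $K_v$ be the completion of $K$ at $v$. Via $G\hookrightarrow\SL_n(K)\hookrightarrow\SL_n(K_v)$ the group $G$ acts on the flag variety $\F$ of $\SL_n$ over $K_v$; this action is amenable because the minimal parabolic subgroups of $\SL_n(K_v)$ are amenable. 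Since $G$ is Zariski dense and not amenable, it is not contained in a bounded subgroup of $\SL_n(K_v)$, and hence $G$ contains a proximal element $g_0$: a regular element whose eigenvalues have pairwise distinct $v$-absolute values, with an attracting fixed flag $\xi_+\in\F$ such that $g_0^k\xi\to\xi_+$ for all $\xi$ away from the repelling flag. Finally identify $G/H$ equivariantly with the $G$-orbit of the standard flag in $\F$, and let $x_+H$ be the coset corresponding to $\xi_+$.

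\smallskip
\emph{Traceability, and the main obstacle.} It remains to show that $\pi(C^*(G))$ contains a non-zero compact operator; since $\pi$ is irreducible this forces $\pi(C^*(G))\supseteq\K(\H)$, i.e.\ $\pi$ is traceable, completing the proof (the case $G=\GL_n(R)$ is treated identically, the centre $R^\times\cdot I\subseteq H$ acting trivially on $\ell^2(G/H)$). The compact operator cannot belong to $\pi(\CCC G)$ — which is spanned by unitaries, and $\H$ is infinite dimensional — so it has to be manufactured inside the $C^*$-completion. The mechanism is the proximal dynamics of $g_0$: the iterates $g_0^k$ collapse $G/H$ towards $x_+H$, and combining this contraction with a damping/averaging procedure over $H$ one builds a sequence in $\pi(C^*(G))$ converging in operator norm to a non-zero multiple of the rank-one projection onto $\delta_{x_+H}$. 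This is the higher-rank counterpart of the analysis behind Theorem~\ref{Theorem1}, and it is the technical heart of the argument: the needed norm estimates are the quantitative form of the contraction above, and it is precisely in guaranteeing that the $v$-adic dynamics on $\F$ is genuinely contracting — equivalently that $K\neq K_v$ — that the hypothesis on $R$ is indispensable (for an algebraic extension of $\FF_p$ no such $v$ exists, in accordance with $G$ then being amenable).
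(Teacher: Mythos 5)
The decisive problem is your choice of $H$. The Borel subgroup $H=B(R)$ is solvable, hence amenable, and this does not just make the traceability step hard --- it makes it false in the main cases covered by the theorem. Indeed, by Hulanicki--Reiter $1_H\prec\lambda_H$, and induction preserves weak containment, so $\pi=\ell^2(G/H)=\Ind_H^G 1_H$ is weakly contained in $\lambda_G$. Take $R=\ZZ$ and $G=\SL_n(\ZZ)$: the center of $G$ lies in $H$, so $\pi$ factors through $\bar G=\operatorname{PSL}_n(\ZZ)$, whose reduced $C^*$-algebra is simple (Bekka--Cowling--de la Harpe); weak containment in $\lambda_{\bar G}$ means $\pi$ factors through $C^*_r(\bar G)$, and simplicity forces $\pi(C^*(G))\cong C^*_r(\bar G)$, a simple \emph{unital} $C^*$-algebra. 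Such an algebra cannot contain $\K(\H)$ for infinite dimensional $\H$, since the compacts would then be a proper non-zero closed ideal. The same obstruction applies to $G=\GL_n(\KK)$ for a field $\KK$ as in the theorem, because there $\pi$ factors through $\PGL_n(\KK)$, which has trivial amenable radical and, being linear, is $C^*$-simple (Breuillard--Kalantar--Kennedy--Ozawa). So no ``damping/averaging'' procedure can manufacture the compact operator you need: the step you defer as the technical heart is not merely unproved, it is unachievable once $H$ is amenable. (A further, smaller error: non-amenability of $G$ does not give unboundedness in $\SL_n(\KK_v)$ for your chosen $v$; for example $\SL_n(\ZZ)\subset\SL_n(\ZZ_p)$ is bounded for every $p$, all eigenvalues of its elements are $v$-adic units, and no proximal element for that valuation exists.)

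For comparison, the paper induces a \emph{finite dimensional} representation $\sigma$ from the stabilizer of a line, i.e.\ the maximal parabolic $H=\begin{pmatrix} R^\times & R^{n-1}\\ 0 & \GL_{n-1}(R)\end{pmatrix}$, which is \emph{not} amenable, and this non-amenability is precisely what drives traceability. Irreducibility is again a commensurator argument via Theorem~\ref{Theo-Induced} (your Zariski-closure computation for the Borel is essentially sound, modulo spelling out why a finite-index subgroup of $gU(R)g^{-1}$ lands inside $U(R)$), but the compact operator is produced by restricting $\pi$ to a subgroup and using the Mackey double-coset decomposition: for $n=3$ one restricts to $H$ itself, observes that $gHg^{-1}\cap H$ is amenable for $g\notin H$, so all summands except the distinguished copy of $\sigma$ are weakly contained in $\lambda_H$, and separates $\sigma$ from them using non-amenability of $H$; for $n\geq4$ one restricts to a copy of $\SL_{n-1}(\ZZ)$ or $\SL_{n-1}(\FF_p[T])$ inside $H$ and uses Property (T) together with an infinite-index lemma. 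In both cases Lemma~\ref{lem-CompactOp} then gives a non-zero compact operator in $\pi(C^*(H))\subseteq\pi(C^*(G))$. If you want to keep a dynamical strategy, you must at minimum replace $B(R)$ by a subgroup for which $\pi$ fails to be weakly contained in the regular representation.
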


In the  case where $R$ is a field or  the ring of integers, we can even produce infinitely  many non equivalent  representations as in  Theorem~\ref{Theorem2}.
   \begin{corollary}
 \label{Cor-Theorem2}
 \begin{itemize}
 \item[(i)] For $n\geq 3,$ let  $G=GL_n(\KK)$ for a countable field $\KK$ which is not an algebraic extension of a finite field.
 There exists an uncountable family  $(\pi_t)_t$ of    pairwise non equivalent infinite dimensional irreducible unitary representations of $G$   which are  traceable.
 Moreover, the representations $\pi_t$ all have a trivial central character, that is, the $\pi_t$'s are 
 representations of $\PGL_n(\KK).$
 \item[(ii)] Let $G=SL_n(\ZZ)$ for $n\geq 3$.   There exists  an infinite family of   pairwise non equivalent infinite dimensional irreducible unitary representations of $G$   which are  traceable.
  \end{itemize}
  \end{corollary}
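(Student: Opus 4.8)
The plan is to revisit the construction that proves Theorem~\ref{Theorem2} and to observe that, when $R=\KK$ is an infinite field, it comes with an uncountable supply of inequivalent parameters, while for $R=\ZZ$ it still supplies infinitely many. I expect the traceable representation of Theorem~\ref{Theorem2} to be produced, via the Mackey machine, as an induced representation $\pi=\Ind_H^G(\sigma)$, where $\sigma$ is built from a non-degenerate unitary character $\psi$ of a certain abelian subgroup $A$ of $G$ (an abelian unipotent subgroup, abstractly isomorphic to $R^{m}$ for some $m\geq 1$) together with an auxiliary representation $\rho$ of the stabiliser $L_\psi$ of $\psi$ in a Levi factor; for admissible $(\psi,\rho)$ the representation $\pi=\pi_\psi$ is infinite dimensional, irreducible and traceable. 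The additional work is to count the admissible $\psi$ and to separate the resulting representations up to unitary equivalence.

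For part (i) take $R=\KK$. Then $\widehat A\cong\widehat{\KK}^{\,m}$ is uncountable (as $\KK$ is infinite), and the admissible characters form an uncountable subset $\Psi\subseteq\widehat A$---for instance one may prescribe independently a non-trivial behaviour on each of the finitely many "simple" pieces of $A$. The scalar subgroup $Z=\KK^{*}I$, being central, lies in $L_\psi$ and acts trivially on $A$; choosing $\rho$ trivial on $Z$ forces $\pi_\psi$ to be trivial on $Z$, i.e.\ a representation of $\PGL_n(\KK)$. Finally, by the Mackey equivalence theorem for induction from $A$, a unitary equivalence $\pi_\psi\cong\pi_{\psi'}$ forces $\psi'$ into the orbit of $\psi$ under the conjugation action of $G$ on $\widehat A$; these orbits are countable since $G$ is, so $\Psi$ gives rise to an uncountable family of inequivalent representations. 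This produces the required uncountable family $(\pi_t)_t$ of pairwise non equivalent infinite dimensional irreducible traceable representations with trivial central character; the case $G=\SL_n(\KK)$ is identical.

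For part (ii) one runs the same construction with $R=\ZZ$, so $\widehat A\cong\TT^{\,m}$. Even if the traceability part of the proof of Theorem~\ref{Theorem2} only applies to $\psi$ subject to an extra condition over $\ZZ$---say that $\psi$ take values in a fixed finite subgroup of $\TT$, so that $\pi_\psi$ is induced from a finite dimensional representation of a finite index subgroup---there remain infinitely many such $\psi$ in infinitely many distinct $G$-orbits, and the separation argument above yields the desired infinite family of pairwise non equivalent infinite dimensional irreducible traceable representations.

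The hard part will be the inequivalence step: recovering $\psi$ up to $G$-conjugacy from the bare isomorphism class of $\pi_\psi$. I would do this by restricting $\pi_\psi$ to the abelian subgroup $A$ and examining the resulting projection-valued measure on $\widehat A$, whose atomic part should be supported exactly on the relevant conjugates of $\psi$; the $G$-orbit of $\psi$, hence its $G$-conjugacy class, is then an invariant of $\pi_\psi$. A second delicate point is to choose $\psi$ and $\rho$ so that irreducibility, traceability and, in case (i), triviality of the central character hold simultaneously; this is where the standing hypothesis that $\KK$, respectively $\mathrm{Frac}(R)$ in positive characteristic, is not an algebraic extension of its prime field enters, as it is what guarantees enough non-degenerate characters of $A$.
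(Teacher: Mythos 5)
There is a genuine gap: your argument is built on a guessed form of the construction in Theorem~\ref{Theorem2} that does not match the actual one, and the facts you would need for your version are never proved. In the paper, the traceable representations are $\pi=\Ind_H^G\sigma$ where $H$ is the stabilizer of the line $\KK e_1$ (the full ``parabolic'' $\begin{pmatrix} R^\times & R^{n-1}\\ 0 & \GL_{n-1}(R)\end{pmatrix}$) and $\sigma$ is an arbitrary \emph{finite dimensional} representation of $H$; finite dimensionality of $\sigma$ is essential both for irreducibility and inequivalence (via $\Comm_G(H)=H$ and Theorem~\ref{Theo-Induced}) and for traceability (Lemma~\ref{lem-CompactOp} needs a finite dimensional invariant subspace of $\pi|_H$, resp.\ $\pi|_L$, not weakly contained in its complement). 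The corollary then reduces to exhibiting uncountably many (resp.\ infinitely many) inequivalent finite dimensional irreducibles of $H$, which the paper gets through the quotient $\GL_{n-1}(R)\ltimes R^{n-1}$: the characters $\chi\circ\det$, $\chi\in\widehat{\KK^*}$, for $R=\KK$; and, for $R=\ZZ$, finite dimensional irreducibles of $\GL_2(\ZZ)$ obtained from a finite-index free subgroup ($n=3$) or of the congruence quotients $\GL_{n-1}(\ZZ/N\ZZ)\ltimes(\ZZ/N\ZZ)^{n-1}$ ($n\geq 4$). Your parametrization by \emph{non-degenerate} characters $\psi$ of an abelian unipotent subgroup $A\cong R^m$ is incompatible with this mechanism: over an infinite field the $\GL_{n-1}(\KK)$-orbit of any non-trivial character of $\KK^{n-1}$ is infinite, so no finite dimensional representation of $H$ can be non-trivial on $A$; a $\sigma$ built from a non-degenerate $\psi$ would be infinite dimensional, and then neither Theorem~\ref{Theo-Induced} nor the compact-operator argument of Theorem~\ref{Theorem2} applies. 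You acknowledge irreducibility and traceability of your $\pi_\psi$ only as an expectation, but that is precisely the content that has to be supplied.

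The separation step also has a hole as written. The ``Mackey equivalence theorem'' you invoke (equivalence of $\pi_\psi$ and $\pi_{\psi'}$ forces $\psi'$ into the $G$-orbit of $\psi$) is a statement about induction from a \emph{normal} abelian subgroup; $A$ is normal in $H$ but not in $G=\GL_n$, so it does not apply, and your fallback via the atomic part of the projection-valued measure of $\pi_\psi|_A$ is only a sketch (for induction from a non-normal subgroup the restriction to $A$ decomposes over double cosets and identifying its atoms is genuine work). The paper avoids all of this: inequivalence of the $\pi_t$ is immediate from Theorem~\ref{Theo-Induced}(ii) once $\Comm_G(H)=H$ is known (already established in the proof of Theorem~\ref{Theorem2}) and the chosen finite dimensional $\sigma$'s are pairwise inequivalent. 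Finally, note that in the paper's construction the central character issue is handled simply because $Z\subset H$ and one controls $\sigma|_Z$, not by choosing $\rho$ trivial on the centre of a little group. To repair your proof you would either have to redo, from scratch, irreducibility and traceability for induction from characters of $A$ (which is not what Theorem~\ref{Theorem2} provides), or switch to varying finite dimensional representations of $H$ as the paper does.
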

   
  
  The methods of proofs of  Theorem~\ref{Theorem1} and Theorem~\ref{Theorem2} are quite different
  in nature:
  \begin{itemize}
  \item the proof of Theorem~\ref{Theorem1} is based on properties  of a remarkable family of unitary representations  
  of groups acting on trees  constructed in \cite{JuVa--84} and used  to show their $K$-theoretic amenability,   a notion  
  which originated from  \cite{Cuntz}\   in the case of free  groups;
   \item the traceable representations we construct in Theorem~\ref{Theorem2} are induced representations from suitable subgroups. The case $n\geq 4$ uses the existence of appropriate subgroups of $\GL_n(R)$  with Kazhdan's Property (T).
  \end{itemize}

 \begin{remark}
 \label{Rem-Theo2}
For a group $G$ as in Theorem~\ref{Theorem1} or  Theorem~\ref{Theorem1}, our results show that   
 the set $\Char(G)$ contains characters of type $I_\infty$.

 For, say, $G=\GL_n(\QQ)$, we do not know whether  $\Char(G)$ contains  characters of type $II_\infty$,
 that is, characters for which the corresponding factorial representation generates a factor of type $II_\infty$.

 \end{remark}
 
  This paper is organized as follows. In Section~\ref{S1}, we establish some  preliminary facts
 which are necessary to the proofs of our results. Section~\ref{S2} is devoted to the proofs  of Theorem~\ref{Theorem1}
 and Corollary~\ref{Cor-Theorem1};  Theorem~\ref{Theorem2} and Corollary~\ref{Cor-Theorem2}
 are proved in Section~\ref{S3}.

\section{Some preliminary results}.
\label{S1}
\subsection{$C^*$-algebras}
\label{SS1}

Let $G$ be a countable group. Recall that a unitary representation of $G$  is  a homomorphism
  $\pi:G\to U(\H)$ from $G$ to the unitary group  of a complex separable Hilbert space $\H.$
From now on, we will simply write \textbf{representation} of $G$ instead of  ``unitary representation of $G$''.

 Every representation  $(\pi,\H)$  of $G$ extends naturally to  a $*$-representation, denoted again by $\pi,$  of the group algebra $\CCC[G]$ by bounded operators on $\H.$
  
Recall that the maximal $C^*$-algebra $C^*(G)$ of $G$ is the completion of $\CCC[G]$  of $G$ with respect to the norm
$$
f\mapsto  \sup_{\pi\in \Rep(G)} \Vert \pi(f) \Vert,
$$
where $\Rep(G)$ denotes the set of  representations $(\pi, \H)$ of $G$ in a separable Hilbert space $\H.$ 

We can view $G$ as subset of $\CCC[G]$ and hence as a subset of $C^*(G)$.
The $C^*$-algebra  $C^*(G)$ has the following universal property: 
every  representation $(\pi, \H)$  of $G$ extends to a unique  representation (that is, $*$-homomorphism) $\pi: C^*(G)\to \B(\H)$. 
The correspondence $G\to C^*(G)$ is functorial: every   homomorphism $\varphi: G_1\to G_2$
between two countable groups $G_1$ and $G_2$  extends to a unique morphism 
$$\varphi_*: C^*(G_1)\to C^*(G_2)$$ of $C^*$-algebras. In particular, given a subgroup $H$
of a group $G,$ the injection map $i:H\to G$ extends to a morphism  $i_*:C^*(H)\to C^*(G);$ 
the map $i_*$ is injective and so $C^*(H)$ can be viewed naturally as a subalgebra of  $C^*(G)$:
indeed, this follows from  the fact that every  representation $\sigma$ of $H$ occurs as subrepresentation of
the restriction to $H$ of some  representation $\pi$ of $G$ (one may take as $\pi$ 
the induced representation $\Ind_H ^G\sigma$, as shown  below in Proposition~\ref{Prop-Induced}).

The following simple lemma will be one of our  tools in order to show that 
$\pi(C^*(G))$ contains a non-zero compact operator for a  representation $\pi$ of $G$.

Let $\A$ be a $C^*$-algebra. 
Recall that  a representation $\pi: \A\to \B(\H)$ weakly contains another 
representation  $\rho: \A\to \B(\K)$  if 
$$
\Vert \rho(a) \Vert \leq \Vert \pi(a) \Vert \tous a\in \A,
$$
or, equivalently, $\ker \pi\subset \ker \rho$ (see Chapter 3 in \cite{Dixm--C*}).
Two representations $\pi$ and $\rho$ are weakly equivalent if $\pi$ weakly contains $\rho$ and 
$\rho$ weakly contains $\pi,$ that is, if $\ker \pi=\ker \rho.$
 \begin{lemma} 
\label{lem-CompactOp}
Let $\A$ be a $C^*$-algebra and $\pi: \A\to \B(\H)$  a representation of $\A$.
Assume that $\H$ contains a non-zero finite dimensional $\pi(\A)$-invariant subspace
$\K$ and that the restriction $\pi_1$ of $\pi$ to $\K$ is not weakly contained in the restriction $\pi_0$ of $\pi$ to 
the orthogonal complement $\K^\perp$.
Then $\pi(\A)$ contains a non-zero compact operator. 
\end{lemma}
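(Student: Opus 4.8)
The plan is to use the finite-dimensional invariant subspace to split $\pi$ as a direct sum of two subrepresentations, and then to produce a single element of $\A$ whose image under $\pi$ is a non-zero operator supported on the finite-dimensional summand; such an operator is automatically compact, so it witnesses the conclusion.

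First I would observe that the orthogonal complement $\K^\perp$ is also $\pi(\A)$-invariant: since $\pi$ is a $\ast$-representation, for $a\in\A$, $v\in\K^\perp$ and $w\in\K$ we have $\langle \pi(a)v,w\rangle=\langle v,\pi(a^*)w\rangle=0$, because $\pi(a^*)w\in\K$. Hence $\H=\K\oplus\K^\perp$ is a decomposition into $\pi(\A)$-invariant subspaces, and accordingly $\pi(a)=\pi_1(a)\oplus\pi_0(a)$ for every $a\in\A$, where $\pi_1$ and $\pi_0$ denote the subrepresentations carried by $\K$ and $\K^\perp$. Next I would unwind the hypothesis that $\pi_1$ is not weakly contained in $\pi_0$: by the characterization of weak containment recalled just before the statement, this is equivalent to $\ker\pi_0\not\subseteq\ker\pi_1$, so there is an element $a\in\A$ with $\pi_0(a)=0$ but $\pi_1(a)\neq 0$. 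For this $a$, the operator $\pi(a)=\pi_1(a)\oplus 0$ is non-zero, and it has finite rank because $\dim\K<\infty$; therefore $\pi(a)$ is a non-zero compact operator belonging to $\pi(\A)$, which is exactly what we want.

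I do not expect a genuine obstacle in this argument. The only points deserving a moment's attention are that invariance of a subspace under a $\ast$-closed set of operators passes to its orthogonal complement, and that one should work with the kernel formulation $\ker\pi_0\subseteq\ker\pi_1$ of weak containment (rather than with the equivalent norm inequality) in order to extract the element $a$ cleanly; once $a$ is in hand, compactness of $\pi(a)$ is immediate from the finite-dimensionality of $\K$.
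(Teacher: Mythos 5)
Your argument is correct and is essentially the paper's own proof: both extract $a\in\A$ with $\pi_0(a)=0$ and $\pi_1(a)\neq 0$ from the kernel formulation of non-weak-containment, and conclude that $\pi(a)=\pi_1(a)\oplus 0$ is a non-zero finite-rank, hence compact, operator. The only difference is that you spell out the (standard) invariance of $\K^\perp$ under the $\ast$-representation, which the paper leaves implicit.
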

\begin{proof}
 The ideal $\ker \pi_0$ is not contained in $\ker \pi_1,$  since $\pi_1$ is not weakly contained in $\pi_0$.
 Hence, there exists $a\in \A$ with  $\pi_0(a)=0$ and $\pi_1(a)\neq 0.$
 Then  $\pi(a)=\pi_1(a)$ has a finite dimensional range and is non-zero.
 \end{proof}
 
 Knowing that a representation of $\A$ contains in its image a non-zero compact operator,
 the following lemma enables us to construct an \emph{irreducible} representation of $\A$ with the same property.

 \begin{lemma} 
\label{lem-IrredCompactOp}
Let $\A$ be a $C^*$-algebra and $\pi: \A\to \B(\H)$  a representation of $\A$  in a separable Hilbert space $\H$
Let $a\in \A$ be such that  $\pi(a)$ is a non-zero compact operator.
Then  there exists an irreducible subrepresentation $\sigma$ of $\pi$ such that $\sigma(a)$  is a compact operator
and such that  $\Vert \sigma(a)\Vert=\Vert \pi(a)\Vert.$
\end{lemma}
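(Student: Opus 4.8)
The plan is to extract the desired irreducible representation from a suitable \emph{minimal projection} of the $C^*$-algebra $\B := \overline{\pi(\A)} \subseteq \B(\H)$, chosen so as to keep control of the norm of $a$. First I would reduce to the case $a \geq 0$: replacing $a$ by $a^*a$ costs nothing, since if $\sigma = \pi|_{\H_0}$ is an irreducible subrepresentation of $\pi$ with $\sigma(a^*a)$ compact and $\|\sigma(a^*a)\| = \|\pi(a^*a)\| = \|\pi(a)\|^2$, then $\sigma(a)$ --- being the restriction of the compact operator $\pi(a)$ to the invariant subspace $\H_0$ --- is compact, and $\|\sigma(a)\|^2 = \|\sigma(a)^*\sigma(a)\| = \|\sigma(a^*a)\| = \|\pi(a)\|^2$. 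So assume $T := \pi(a) \geq 0$ and put $\mu := \|T\| = \|\pi(a)\| > 0$.

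Since $T$ is a nonzero positive compact operator, $\mu$ is an isolated point of its spectrum and an eigenvalue of finite multiplicity; taking a continuous function $f$ that vanishes at $0$, equals $1$ near $\mu$, and equals $0$ on the rest of the spectrum of $T$, the spectral projection $Q := f(T)$ for the eigenvalue $\mu$ then lies in $C^*(T) \subseteq \B$, is a nonzero finite-rank projection, and satisfies $TQ = \mu Q$. Now I would note that the corner $Q\B Q$ is a $C^*$-algebra with unit $Q$ acting on the finite-dimensional space $Q\H$, hence is finite-dimensional, and so contains a minimal projection $e$: a nonzero projection with $e(Q\B Q)e = \CCC e$ (and necessarily $e \leq Q$). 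From $eQ = e$ one gets $e\B e = \CCC e$, so $e$ is in fact a minimal projection of $\B$; it is finite-rank and, lying below $Q$, satisfies $Te = \mu e$.

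Now pick a unit vector $\eta$ in the range of $e$, so that $e\eta = \eta$ and $T\eta = \mu\eta$, and set $\H_0 := \overline{\pi(\A)\eta}$ and $\sigma := \pi|_{\H_0}$. Since $\eta \in \H_0$ we get $\mu = \|\sigma(a)\eta\| \leq \|\sigma(a)\| \leq \|T\| = \mu$, while $\sigma(a) = T|_{\H_0}$ is compact as the restriction of a compact operator to an invariant subspace. To see that $\sigma$ is irreducible, I would use minimality of $e$ once more: $e\B e = \CCC e$ forces $e\B\eta \subseteq \CCC\eta$, so the range of $e$ meets $\H_0$ in exactly the line $\CCC\eta$; if $\H_0$ were an orthogonal sum $\H_1 \oplus \H_2$ of two nonzero $\B$-invariant subspaces (recall $\B$ is self-adjoint, so the orthocomplement of an invariant subspace is invariant), applying $e$ would split that line, placing $\eta$ in one of the $\H_i$ and hence forcing $\H_0 = \overline{\pi(\A)\eta}$ to equal it --- a contradiction. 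This yields $\sigma$ with all the required properties.

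I expect the one genuinely delicate point to be the norm equality $\|\sigma(a)\| = \|\pi(a)\|$. Extracting \emph{some} irreducible subrepresentation of $\pi$ on which $a$ acts as a nonzero compact operator is comparatively soft; controlling its norm is what forces the choice of $e$ \emph{beneath} the top spectral projection $Q$ of $\pi(a)$, since it is precisely this that places an eigenvector of $\pi(a)$ for the eigenvalue $\mu$ inside the cyclic subspace $\H_0$. (If one prefers not to reduce to $a \geq 0$ at the outset, the same argument works with $Q$ the top spectral projection of $|\pi(a)| = \pi((a^*a)^{1/2})$, since a unit vector $\eta$ with $|\pi(a)|\eta = \mu\eta$ then also satisfies $\|\pi(a)\eta\| = \mu$.)
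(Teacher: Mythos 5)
Your argument is correct, and it takes a genuinely different route from the paper's. The paper proves the lemma by writing $\pi$ as a direct integral $\int^{\oplus}_{\Omega}\pi_{\omega}\,d\mu(\omega)$ of irreducible representations, diagonalizing the positive compact operator $\pi(a^*a)$ in that picture, showing that the set of $\omega$ where a top eigenvector has a non-zero component is a finite union of atoms of $\mu$, and identifying the fibre representation $\pi_{\omega_0}$ at such an atom with a subrepresentation of $\pi$ having the required properties. You instead stay inside the concrete $C^*$-algebra $\mathcal{B}=\overline{\pi(\mathcal{A})}$: the top spectral projection $Q$ of $\pi(a^*a)$ lies in $\mathcal{B}$ by continuous functional calculus and has finite rank, so the corner $Q\mathcal{B}Q$ is finite dimensional and contains a minimal projection $e$ with $e\mathcal{B}e=\mathbf{C}e$, and the cyclic subspace $\mathcal{H}_0=\overline{\pi(\mathcal{A})\eta}$ generated by a unit vector $\eta$ in the range of $e$ carries an irreducible subrepresentation which, since it contains the eigenvector $\eta$ for the top eigenvalue, gives both compactness and the norm equality; your reduction to $a\geq 0$ via the $C^*$-identity matches the paper's. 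Two points you use implicitly deserve a line in a written version: $\eta\in\mathcal{H}_0$ because $e$ is a norm limit of elements of $\pi(\mathcal{A})$ and $e\eta=\eta$; and in the irreducibility argument, invariance of $\mathcal{H}_1,\mathcal{H}_2$ under $e$ together with $e\eta=\eta$ forces the components $\eta_i$ of $\eta$ to satisfy $e\eta_i=\eta_i\in\mathrm{ran}(e)\cap\mathcal{H}_0=\mathbf{C}\eta$, so $\eta$ lies in one of the $\mathcal{H}_i$ and $\mathcal{H}_0$ collapses onto it. As for what each approach buys: the paper's proof is a direct application of the direct-integral machinery, but it relies on the separability hypotheses underlying that decomposition (harmless there, since $\mathcal{A}=C^*(G)$ for countable $G$); your proof is elementary --- no measurable fields, and in fact no separability of $\mathcal{H}$ or $\mathcal{A}$ is needed, so it proves a slightly more general statement --- and it isolates the mechanism behind the delicate norm equality, namely choosing the minimal projection beneath the top spectral projection of $\pi(a^*a)$.
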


 \begin{proof}
 We can decompose $\pi$ as a direct integral $ \int^{\oplus}_{\Omega} \pi_{\omega} d\mu (\omega)$ of irreducible representations $\pi_\omega;$ thus, we can find
 a probabilility measure $\mu$ on a standard Borel space $\Omega$,
a  measurable field  $\omega \to \pi_{\omega}$ of irreducible  representations of $\A$ 
in a measurable field $\omega \to \H_{\omega}$
   of separable Hilbert spaces on $\Omega$,
and a Hilbert space isomorphism $U:\H \to \int^{\oplus}_{\Omega} \H_{\omega} d \mu (\omega)$
such that 
$$
U\pi(x)U^{-1} = \int^{\oplus}_{\Omega} \pi_{\omega}(x) d\mu (\omega).
\tous x\in \A 
$$
(see \cite[\S 8.5]{Dixm--C*}). Without loss of generality, we will
identify $\pi$ with  $\int^{\oplus}_{\Omega} \pi_{\omega} d\mu (\omega)$.

 Let $a\in \A$ be such that $\pi(a)$ is a non-zero compact operator.
Since $\Vert \sigma(a^*a)\Vert =\Vert \sigma(a)\Vert^2$ for every 
representation $\sigma$ of $\A,$ 
upon replacing $a$ by $a^*a,$
we can assume  that $a$ is a positive element of $\A$.
So  $\pi(a)$ is  a positive selfadjoint compact operator on $\H$ with $\pi(a)\neq 0.$

There exists an orthonormal basis $(F_n)_{n\geq 1}$ of $\H=\int^{\oplus}_{\Omega} \H_{\omega} d \mu (\omega)$
consisting of eigenvectors of $\pi(a)$,
with corresponding eigenvalues $(\lambda_n)_{n\geq 1}$, counted with multiplicities.
For every $\omega\in \Omega$  and  every $n\geq 1,$ we have 
$$
\pi_\omega(a) (F_n(\omega))=\lambda_n F_n(\omega).
\leqno{(*)}
$$

Let $n_0\geq 1$ be such that $\lambda_{n_0}=\max\{\lambda_{n}\mid n\geq 1\}$. Then  
$\Vert \pi(a)\Vert=\lambda_{n_0}.$
Set 
$$
\Omega_0=\{\omega\in \Omega\mid F_{n_0}(\omega)\neq 0.\}.
$$
Since $F_{n_0}\neq 0,$ we have $\mu(\Omega_0)>0.$
We claim that $\Omega_0$ is a finite subset of $\Omega$ 
consisting  of atoms of $\mu.$ 
Indeed, assume by contradiction that is not the case. Then there exists
an infinite  sequence $(A_k)_k$ of pairwise disjoint Borel subsets of $\Omega_0$ with $\mu(A_k)>0$.
Observe  that  $\Un_{A_k} F_{n_0}$ is a  non-zero vector in $\H$ 
and that $\langle \Un_{A_k} F_{n_0},\mid \Un_{A_l} F_{n_0}\rangle=0$ for 
every $k\neq l.$ Moreover, 
we have 
$$
\begin{aligned}
\pi(a)(\Un_{A_k} F_{n_0})&= \int^{\oplus}_{A_k} \pi_{\omega}(a) (F_{n_0}(\omega)) d\mu (\omega)\\
&= \lambda_{n_0}\int^{\oplus}_{A_k} F_{n_0}(\omega) d\mu (\omega)\\
&=\lambda_{n_0} \Un_{A_k} F_{n_0}.
\end{aligned}
$$
Since $\pi(a)$ is a compact operator and $\lambda_{n_0}\neq 0,$ this is a contradiction.

Let $\omega_0\in \Omega_0$ be such that $\mu(\{\omega_0\})>0.$ 
We claim that the linear span of $\{F_n(\omega_0)\mid n\geq 1\}$ is dense in $\H_{\omega_0}.$
Indeed, let $v\in \H_{\omega_0}$ be such that 
$$\langle v \mid F_n(\omega_0)\rangle=0 \tous n\geq 1.$$
 Let $F=\Un_{\omega_0}\otimes v\in \H$ be defined
by $F(\omega_0)=v$ and $F(\omega)=0$ for $\omega\neq \omega_0.$ 
Then $\langle F\mid F_n \rangle=0$
for all $n\geq 1.$ Hence, $F=0,$ that is, $v=0$, since $(F_n)_{n\geq 1}$ is a
basis of $\H.$ 

By $(*)$, $F_n(\omega_0)$ is an eigenvector of $\pi_{\omega_0}(a)$ 
with eigenvalue $\lambda_n$ for every $n\geq 1$ such that $F_n(\omega_0)\neq 0.$ 
Since   $\{F_n(\omega_0)\mid   n\geq 1 \}$ is a
 total subset of $\H_{\omega_0}$, it follows that there exists a basis of $\H_{\omega_0}$
 consisting of eigenvectors of $\pi_{\omega_0}(a)$.
As 
$$\lim_{n\to \infty} \lambda_n=0$$
 (in case the sequence $(\lambda_n)_{n\geq 1}$ is infinite),  it follows that $\pi_{\omega_0}(a)$ is a 
  compact operator on $\H_{\omega_0}.$
 Moreover,   we have 
 $$\Vert \pi_{\omega_0}(a)\Vert =\max\{\lambda_{n}\mid n\geq 1\}= \lambda_{n_0} =\Vert \pi(a)\Vert.$$
 
Finally,   an equivalence  between $\pi_{\omega_0}$  and a subrepresentation of $\pi$
is provided by the unitary linear map
$$\H_{\omega_0}\to \H,\,  v\mapsto \Un_{\omega_0}\otimes v.$$
 \end{proof}

\subsection{Induced representations of groups}
\label{SS2}
In the sequel, we will often  consider   group representations  which are induced representations.
 Let $G$ be a countable group, $H$ a subgroup of $G$ and $(\sigma, \K)$ a  representation of $H$.
  Recall that  the induced representation $ \Ind_H^G \sigma$ of $G$  may be realized as follows. 
 Let $\H$ be the Hilbert space of maps $f : G \to \K$ with the following properties
\begin{itemize}
\item[(i)] 
$f(hx) = \sigma(h) f(x)$ for all $x \in G, h \in H$;
\item[(ii)]
$\sum_{x \in H \backslash G} \Vert f(x) \Vert^2 < \infty.$
(Observe that
 $\Vert f(x) \Vert$ only depends on the coset of $x$ in $H\backslash G$.)
\end{itemize}
 
The induced representation
$\pi = \Ind_H^G \sigma$
is given on $\H$ by right translation: 
$$
(\pi(g) f)(x) \, = \, f(xg) 
\hskip.5cm \text{for all} \hskip.2cm 
g \in G, \hskip.1cm f \in \H
\hskip.2cm \text{and} \hskip.2cm 
x \in G.
$$

Recall that the \textbf{commensurator} of $H$ in $G$
is the subgroup, denoted by $\Comm_G (H)$, of the elements $g \in G$ such that 
$ gH g^{-1}\cap H$ is of finite index in both $H$ and $g^{-1} Hg$.

The following result  appeared in \cite{Mack--51} in the case where $\sigma$ is of dimension 1
and was extended to its present form in \cite{Klep--61} and \cite{Corw--75}.
\begin{theorem}
\label{Theo-Induced}
Let $G$ be a countable group and $H$ a subgroup of $G$
  such that  $\Comm_G (H)=H.$
\begin{itemize}
 \item [(i)] For every   \emph{finite dimensional} irreducible  representation $\sigma$ of $H,$
 the induced representation $\Ind_H^G \sigma$ is   irreducible.
 
 \item [(ii)] Let  $\sigma_1$ and $\sigma_2$ be non equivalent \emph{finite dimensional} irreducible  representations of 
 $H$. The representations $\Ind_H^G \sigma_1$ and $\Ind_H^G \sigma_2$ are non equivalent.
   
\end{itemize}
\end{theorem}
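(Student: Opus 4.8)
The plan is to compute the space $\operatorname{Hom}_G(\Ind_H^G\sigma_1,\Ind_H^G\sigma_2)$ of bounded intertwining operators explicitly, by means of Mackey's analysis of such operators in terms of the double coset space $H\backslash G/H$; parts (i) and (ii) then drop out of Schur's lemma. (For $\dim\sigma=1$ this computation is carried out in \cite{Mack--51}, and in general in \cite{Klep--61} and \cite{Corw--75}; below I only sketch the argument and point out where the hypotheses are used.) Realize $\pi_i=\Ind_H^G\sigma_i$ on the space $\H_i$ of $\K_i$-valued functions on $G$ described above. The first step is the observation that every bounded $T\colon\H_1\to\H_2$ with $T\pi_1(g)=\pi_2(g)T$ for all $g\in G$ is implemented by a kernel: there is a function $\kappa\colon G\to\operatorname{Hom}(\K_1,\K_2)$ with $\Vert\kappa(t)\Vert\leq\Vert T\Vert$ and
\[
\kappa(h_1th_2)=\sigma_2(h_2)^{-1}\,\kappa(t)\,\sigma_1(h_1)^{-1}\qquad(h_1,h_2\in H,\ t\in G),
\]
such that $(Tf)(y)=\sum_{Hx\in H\backslash G}\kappa(xy^{-1})f(x)$; this is obtained by evaluating the matrix coefficients of $T$ on the functions supported on a single right $H$-coset, which span a dense subspace of $\H_1$.

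The covariance relation shows that $\kappa$ is determined by its values on a set of representatives of $H\backslash G/H$, and that for each $t\in G$ the operator $\kappa(t)$ intertwines, on the subgroup $H\cap t^{-1}Ht$, the representations $h\mapsto\sigma_1(tht^{-1})$ and $h\mapsto\sigma_2(h)$; on the trivial double coset this reads $\kappa(e)\in\operatorname{Hom}_H(\sigma_1,\sigma_2)$. The heart of the matter is to show that $\kappa(g)=0$ for every $g\in G\setminus H$, and here both the hypothesis $\Comm_G(H)=H$ and the finite-dimensionality of the $\sigma_i$ enter. Since $g\notin\Comm_G(H)$, at least one of the indices $[H:H\cap gHg^{-1}]$ and $[H:H\cap g^{-1}Hg]$ is infinite; replacing $T$ by its adjoint $T^*$, which is a bounded intertwiner $\H_2\to\H_1$ with kernel $t\mapsto\kappa(t^{-1})^*$, and $g$ by $g^{-1}$ if needed, we may assume $[H:H\cap gHg^{-1}]=\infty$.

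Now fix a right $H$-coset $Hx$ and an orthonormal basis $v_1,\dots,v_d$ of $\K_1$, and let $f_k\in\H_1$ be the function supported on $Hx$ with $f_k(x)=v_k$, so that $\Vert f_k\Vert=1$. The right $H$-cosets $Hy$ for which $xy^{-1}\in HgH$ are parametrized by $h\in H/(H\cap gHg^{-1})$, and for the representative $y=g^{-1}h^{-1}x$ one computes $(Tf_k)(y)=\kappa(hg)v_k=\kappa(g)\sigma_1(h)^{-1}v_k$. Since the Hilbert--Schmidt norm is invariant under composition with a unitary,
\[
d\,\Vert T\Vert^2\;\geq\;\sum_{k=1}^d\Vert Tf_k\Vert^2\;\geq\;\sum_{h\in H/(H\cap gHg^{-1})}\;\sum_{k=1}^d\Vert\kappa(g)\sigma_1(h)^{-1}v_k\Vert^2\;=\;\sum_{h\in H/(H\cap gHg^{-1})}\Vert\kappa(g)\Vert_{\HS}^2 ,
\]
and since the index is infinite this forces $\Vert\kappa(g)\Vert_{\HS}=0$, i.e.\ $\kappa(g)=0$. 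Hence $\kappa$ is supported on $H$, so $T$ is already determined by $\kappa(e)$, and the map $T\mapsto\kappa(e)$ is an injection $\operatorname{Hom}_G(\Ind_H^G\sigma_1,\Ind_H^G\sigma_2)\hookrightarrow\operatorname{Hom}_H(\sigma_1,\sigma_2)$.

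For (i), take $\sigma_1=\sigma_2=\sigma$ irreducible; then $\dim\operatorname{Hom}_H(\sigma,\sigma)=1$ by Schur's lemma, so $\dim\operatorname{Hom}_G(\Ind_H^G\sigma,\Ind_H^G\sigma)\leq1$, and as this space contains the identity it equals $\CCC\,\mathrm{Id}$, i.e.\ $\Ind_H^G\sigma$ is irreducible. For (ii), if $\sigma_1$ and $\sigma_2$ are inequivalent and irreducible then $\operatorname{Hom}_H(\sigma_1,\sigma_2)=0$, whence $\operatorname{Hom}_G(\Ind_H^G\sigma_1,\Ind_H^G\sigma_2)=0$ and the two induced representations are inequivalent. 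The main obstacle is the first step --- verifying rigorously that every bounded intertwiner arises from such a kernel, and that the rearrangement of $\sum_k\Vert Tf_k\Vert^2$ as an a priori infinite sum over cosets is legitimate; this is exactly the technical content of the Mackey--Klepsch--Corwin theorem. Finally, it should be stressed that the finite-dimensionality of the $\sigma_i$ is essential in the Hilbert--Schmidt estimate above, and that the statement indeed fails for infinite-dimensional $\sigma$.
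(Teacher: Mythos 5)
Your argument is correct: the double-coset kernel description of $\operatorname{Hom}_G(\Ind_H^G\sigma_1,\Ind_H^G\sigma_2)$, with the Hilbert--Schmidt estimate forcing the kernel to vanish on each double coset $HgH$ with $[H:H\cap gHg^{-1}]=\infty$ (and the passage to $T^*$ and $g^{-1}$ to handle the other index), together with Schur's lemma, is exactly the classical Mackey--Kleppner--Corwin argument, and the places where $\Comm_G(H)=H$ and the finite-dimensionality of the $\sigma_i$ are used are identified correctly. The paper itself gives no proof of Theorem~\ref{Theo-Induced} and simply cites \cite{Mack--51}, \cite{Klep--61} and \cite{Corw--75}, so your sketch is a faithful reproduction of the proof in those references rather than an alternative route.
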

We will need to decompose the restriction to a subgroup  of an induced representation
$\Ind_H^G \sigma$ as in Theorem~\ref{Theo-Induced}.
For $g\in G,$ we denote by  $\sigma^g$
the representation of $g^{-1} Hg$ defined by $\sigma^g(x)= \sigma(gxg^{-1})$ for $x\in g^{-1} Hg.$

For the convenience of the reader, we give a short and elementary proof 
of the following  special case of the far more general result  \cite[Theorem 12.1]{Mack--52}.
 
 \begin{proposition}
\label{Prop-Induced}
Let $G$ be a countable group, $H,L$  subgroups of $G$ and $(\sigma, \K)$ a  representation of
$H.$  Let  $S$ be a system of representatives for the double coset space $H\backslash G/L$.
 The restriction  $\pi|_L$ to $L$ of the induced representation $\pi=\Ind_H^G \sigma$  is equivalent to the direct sum
$$
\bigoplus_{s\in S} \Ind_{s^{-1}H s\cap L}^L (\sigma^s|_{s^{-1}H s\cap L})
$$
\end{proposition}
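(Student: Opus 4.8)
The plan is to write down the standard Mackey double-coset decomposition explicitly at the level of the model of the induced representation given above, and to verify by direct computation that each summand is an $L$-invariant subspace carrying the asserted induced representation. First I would recall that the Hilbert space $\H$ of $\pi = \Ind_H^G\sigma$ consists of functions $f: G\to\K$ with $f(hx)=\sigma(h)f(x)$ and $\sum_{x\in H\backslash G}\Vert f(x)\Vert^2<\infty$, and that $L$ acts by right translation. The set $G$ decomposes as a disjoint union of double cosets $HsL$ for $s\in S$; since the support condition and the transformation law are both determined coset-by-coset on the left, the restriction of $f$ to each $HsL$ can be prescribed independently. This gives an orthogonal direct sum decomposition $\H=\bigoplus_{s\in S}\H_s$, where $\H_s$ is the subspace of functions supported on $HsL$. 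Each $\H_s$ is visibly invariant under right translation by $L$, since right translation by $L$ preserves each double coset $HsL$.

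Next I would identify the $L$-representation on $\H_s$ with $\Ind_{s^{-1}Hs\cap L}^L(\sigma^s|_{s^{-1}Hs\cap L})$. The natural map is restriction-and-translation: to $f\in\H_s$ associate the function $\varphi: L\to\K$, $\varphi(\ell)=f(s\ell)$. One checks that $\varphi(m\ell)=\sigma^s(m)\varphi(\ell)$ for $m\in s^{-1}Hs\cap L$, using that $f(hx)=\sigma(h)f(x)$ and that $sms^{-1}\in H$ for such $m$; the transformation law $f(s m\ell) = f((sms^{-1})s\ell) = \sigma(sms^{-1})f(s\ell) = \sigma^s(m)\varphi(\ell)$ is exactly what is needed. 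The parametrization of $H\backslash HsL$ by $(s^{-1}Hs\cap L)\backslash L$ via $\ell\mapsto Hs\ell$ is a bijection, which shows that this map is an isometry onto the model of $\Ind_{s^{-1}Hs\cap L}^L(\sigma^s|_{s^{-1}Hs\cap L})$ and converts right translation by $L$ on $\H_s$ into right translation by $L$ on that induced space. Summing over $s\in S$ gives the claimed equivalence.

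I expect the only delicate points to be bookkeeping: verifying that the identification $\ell\mapsto Hs\ell$ genuinely gives a bijection $(s^{-1}Hs\cap L)\backslash L\to H\backslash HsL$ (so that the $\ell^2$-norms match up and $\varphi$ is well-defined as an element of the target Hilbert space), and checking that the whole construction is independent of the choice of double-coset representatives up to the obvious unitary isomorphism. Neither of these is a genuine obstacle; they are the routine verifications underlying Mackey's subgroup theorem, and the countability of $G$ ensures there are no measure-theoretic subtleties, so the direct sum is an honest Hilbert-space direct sum indexed by the countable set $S$.
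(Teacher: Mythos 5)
Your proposal is correct and follows essentially the same route as the paper's proof: the orthogonal decomposition of $\H$ into subspaces $\H_s$ of functions supported on the double cosets $HsL$, followed by the unitary identification $f\mapsto(\ell\mapsto f(s\ell))$ with the model of $\Ind_{s^{-1}Hs\cap L}^L(\sigma^s|_{s^{-1}Hs\cap L})$, using the bijection $(s^{-1}Hs\cap L)\backslash L\to H\backslash HsL$ to match the $\ell^2$-norms. The ``delicate points'' you flag are exactly the routine verifications the paper carries out explicitly, so nothing further is needed.
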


\begin{proof}
Let $\H$ be the Hilbert space  of $\pi$, as  described above.
For every $s\in S,$ let $\H_s$ be the space of maps $f\in \H$ such that $f=0$ outside
the double coset $HsL.$ 
We have an orthogonal $L$-invariant decomposition 
$$
\H=\bigoplus_{s\in S} \H_s.
$$

Fix $s\in S.$ The Hilbert space $\H'_s$ of $\Ind_{s^{-1}H s\cap L}^L (\sigma^s|_{s^{-1}H s\cap L})$
consists of the maps $f: L\to \K$ such that 
\begin{itemize}
\item
$f(t x) = \sigma(sts^{-1}) f(x)$ for all $t \in s^{-1}H s\cap L, x \in L$;
\item
$\sum_{x \in s^{-1}H s\cap L\backslash L} \Vert f(x) \Vert^2 < \infty$.
\end{itemize}
Define a linear map $U: \H_s\to \H_s'$  by 
$$
Uf(x)= f(sx) \tous f\in \H_s, x\in L.
$$
Observe that, for $t\in s^{-1}H s\cap L$, $x\in L$ and $f\in \H_s,$ we have
$$
Uf(tx)=f(stx)= f((sts^{-1}) sx)= \sigma(sts^{-1}) f(sx)= \sigma^s(t) Uf(x)
$$
and that 
$$
\sum_{x \in s^{-1}H s\cap L\backslash L} \Vert Uf(x) \Vert^2 =
\sum_{x \in s^{-1}H s\cap L\backslash L} \Vert f(sx) \Vert^2=\sum_{y\in H\backslash G} \Vert f(y) \Vert^2 <\infty,
$$
so that $Uf\in \H_s'$  and $U$ is an isometry. It is easy to check that the map $U$ is invertible, with inverse given by 
$$
U^{-1}f(y)=
\begin{cases} 
  \sigma(h)f(x)   &\text{if } y=hsx\in HsL \\
   0    & \text{otherwise },
  \end{cases}
$$
for $f\in  \H_s'.$
Moreover, $U$ intertwines  the restriction of $\pi|_L$ to $\H_s$ and $\Ind_{s^{-1}H s\cap L}^L (\sigma^s|_{s^{-1}H s\cap L}):$
for $g, x\in L$ and $ f\in \H_s'$, we have 
$$
\begin{aligned}
\left(U \pi(g)U^{-1}f\right)(x)&= \left(\pi(g)U^{-1}f\right)( sx)\\
&= (U^{-1}f( sxg) \\
&= f(xg) \\
&= \left(\Ind_{s^{-1}H s\cap L}^L (\sigma^s|_{s^{-1}H s\cap L})(g) f\right)(x)
\end{aligned}
$$
\end{proof}

We will need  the following elementary lemma about induced representations containing a
finite dimensional representation.
Recall that a representation $\pi$ of a group $G$ contains another representation $\sigma$ of $G$ if 
 $\sigma$ is equivalent to a subrepresentation of $\pi.$ 
Recall also that,  if $\pi$ is finite dimensional representation of a group $G$, then 
$\pi\otimes \bar{\pi}$ contains the trivial representation $1_G$,
where $\bar\pi$ is the conjugate 
representation of $\pi$ and $\pi\otimes \rho$ denotes the (inner) tensor product 
of the  representations $\pi$ and $\rho$ (see \cite[Proposition A. 1.12]{BHV}).

\begin{proposition}
\label{Prop-Induced-SubRep}
Let $G$ be a countable group, $H$ a subgroup of $G$,
 and $\sigma$ a  representation of $H.$
Assume that  the induced representation $\Ind_H^G \sigma$ contains 
a finite dimensional representation of $G.$ Then $H$ has finite index in $G.$
\end{proposition}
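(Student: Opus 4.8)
The plan is to exploit a Frobenius-reciprocity type argument together with the observation about $\pi \otimes \bar\pi$ recalled just above the statement. Suppose $\rho$ is a finite dimensional representation of $G$ that is contained in $\Ind_H^G \sigma$. The first step is to pass to the conjugate: since $\rho$ is a subrepresentation of $\pi = \Ind_H^G \sigma$, the conjugate $\bar\rho$ is a subrepresentation of $\bar\pi = \Ind_H^G \bar\sigma$, and therefore the trivial representation $1_G$, being contained in $\rho \otimes \bar\rho$, is contained in $\pi \otimes \bar\rho$. Now I would use the standard identity $\Ind_H^G(\sigma) \otimes \bar\rho \cong \Ind_H^G(\sigma \otimes \bar\rho|_H)$ (the projection formula for induced representations), so that $1_G$ is contained in $\Ind_H^G(\tau)$ where $\tau = \sigma \otimes \bar\rho|_H$ is a representation of $H$.

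The second and decisive step is to show that if $1_G$ is contained in $\Ind_H^G \tau$ for some representation $\tau$ of $H$, then $[G:H] < \infty$. Realizing $\Ind_H^G\tau$ on the space $\H$ of square-summable $\K$-valued functions on $H\backslash G$ equivariant under $H$, a nonzero $G$-invariant vector is a function $f \colon G \to \K$ with $f(hx) = \tau(h)f(x)$, with $\sum_{x\in H\backslash G}\|f(x)\|^2 < \infty$, and with $f(xg) = f(x)$ for all $g \in G$. The last condition forces $\|f(x)\|$ to be constant on $H\backslash G$; since it is square-summable and $f \neq 0$, the index set $H\backslash G$ must be finite, i.e. $[G:H] < \infty$.

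Putting the two steps together gives the claim. The main obstacle — though it is really only a bookkeeping point rather than a genuine difficulty — is justifying the projection formula $\Ind_H^G(\sigma)\otimes\bar\rho \cong \Ind_H^G(\sigma\otimes\bar\rho|_H)$ in the unitary (possibly infinite dimensional) setting; here it is clean because $\bar\rho$ is finite dimensional, and one writes down the intertwiner $f \mapsto (x \mapsto f(x)\otimes \bar\rho(x^{-1})\xi)$ explicitly, or simply invokes \cite[Proposition E.2.5]{BHV} or the analogous statement in \cite{Mack--51}. An alternative route that avoids the tensor identity entirely: instead of tensoring, observe directly that $\Ind_H^G\sigma$ contains the finite dimensional $\rho$, decompose $\rho|_H$ and apply Frobenius reciprocity to see that $\Ind_H^G\sigma$ contains $\Ind_H^G(\rho|_H)$ as a summand, which in turn contains $\Ind_H^G(1_H) = \ell^2(H\backslash G)$ as a summand once one knows $\rho|_H$ contains $1_H$ — but $\rho|_H$ need not contain $1_H$, so the tensoring trick via $\rho\otimes\bar\rho \supset 1_G$ is the robust argument, and I would present that one.
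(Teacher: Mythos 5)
Your proof is correct and follows essentially the same route as the paper: tensor to produce the trivial representation inside an induced representation via $\rho\otimes\bar\rho\supset 1_G$ and the projection formula, then observe that a $G$-invariant vector in $\Ind_H^G\tau$ has $\Vert f(x)\Vert$ constant on $H\backslash G$, forcing finite index. The only (cosmetic, and arguably slightly cleaner) difference is that you tensor $\pi$ with the finite dimensional $\bar\rho$, whereas the paper tensors with the full conjugate $\bar\pi$.
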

\begin{proof}
By assumption, $\pi:=\Ind_H^G \sigma$ contains a  finite dimensional representation $\sigma.$
Hence, $\pi\otimes \bar{\pi}$ contains $1_G.$ 
On the other hand, 
$$\pi\otimes \bar{\pi}= (\Ind_H^G \sigma)\otimes \bar{\pi}$$
is equivalent to $\Ind_H^G (\rho)$, where $\rho= \sigma \otimes (\bar{\pi}|_H)$;
see \cite[Proposition E. 2.5]{BHV}. So, there exists  a non-zero map $f : G \to \K$ 
in the Hilbert space of $\Ind_H^G (\rho)$ which is $G$-invariant, that is,
such that $f(xg)=f(x)$ for all $g,x \in G$. This implies that the 
$L^2$-function $x\mapsto \Vert f(x) \Vert^2$ is constant on $H \backslash G.$ 
This is only possible if $H \backslash G$ is finite. 

\end{proof}

\subsection{Amenability }
\label{SS3}
Let $\GG$ be a topological group and $UCB(\GG)$ the  Banach space  of the 
 left uniformly continuous bounded functions on $\GG,$ 
 equipped with the uniform norm. Recall  that $\GG$ is amenable
 if there exists a $\GG$-invariant mean on $UCB(\GG)$ (see Appendix G in \cite{BHV}).

The following proposition characterizes the integral domains $R$ for which $\GL_n(R)$ or $\SL_n(R)$ is amenable;
the proof is an easy extension of the proof given in Proposition 9 in \cite{HoRo--89} for the case where $R$ is a field.

\begin{proposition}
\label{Pro-AmenableSLn}
Let $R$ be a countable unital commutative ring which is an integral domain.
Let $\KK$ be the field of fractions of $R$
and $G=\GL_n(R)$ or $G=\SL_n(R)$ for an integer $n\geq 2$.
The following properties are equivalent:
\begin{itemize}
\item[(i)]   $G$ is not amenable.
\item[(ii)] $\KK$  is not an  algebraic extension of a finite field.
\item[(iii)]  $R$ contains $\ZZ$ if the characteristic of $\KK$ is $0$
or the polynomial ring $\FF_p[T]$  if the characteristic of $\KK$ is $p>0.$
\end{itemize}
\end{proposition}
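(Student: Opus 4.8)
The plan is to establish the cycle of implications $(iii)\Rightarrow(i)$, $(i)\Rightarrow(ii)$ and $(ii)\Rightarrow(iii)$, using throughout the standard permanence properties of amenability from Appendix G of \cite{BHV}: amenability passes to subgroups, quotients, group extensions and increasing unions, and the non-abelian free group $F_2$ is not amenable. As a preliminary reduction, since $R$ is commutative, $\SL_n(R)$ is normal in $\GL_n(R)$ with quotient embedded by $\det$ in the abelian group $R^\times$; hence $\GL_n(R)$ is amenable if and only if $\SL_n(R)$ is, and we may freely pass between the two cases of $G$, using $\SL_n(R)$ when exhibiting a non-amenable subgroup and $\GL_n(R)$ when proving amenability.

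For $(ii)\Leftrightarrow(iii)$ only elementary field theory is needed. If $\operatorname{char}\KK=0$ then $\ZZ$ embeds canonically in $R$ and $\QQ\subseteq\KK$, so $(ii)$ and $(iii)$ both hold. If $\operatorname{char}\KK=p>0$, then $\KK$ is an algebraic extension of a finite field exactly when it is algebraic over its prime field $\FF_p$, i.e. when every element of $R$ is algebraic over $\FF_p$; this fails precisely when $R$ contains an element transcendental over $\FF_p$ (writing a transcendental $t\in\KK$ as $a/b$ with $a,b\in R$, one of $a,b$ must be transcendental, for otherwise $\FF_p(a,b)$ is a finite field and $t$ is algebraic), which is exactly the assertion that $\FF_p[T]$ embeds in $R$.

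For $(iii)\Rightarrow(i)$: in characteristic $0$ we have $\ZZ\subseteq R$, so $G\supseteq\SL_n(R)\supseteq\SL_2(\ZZ)$ (embedded in the upper-left block), and $\SL_2(\ZZ)$ contains the free group generated by $\left(\begin{smallmatrix}1&2\\0&1\end{smallmatrix}\right)$ and $\left(\begin{smallmatrix}1&0\\2&1\end{smallmatrix}\right)$; hence $G$ is not amenable. In characteristic $p>0$ we have $\FF_p[T]\subseteq R$, so $G\supseteq\SL_2(\FF_p[T])$, and it suffices to show that this group contains a non-abelian free subgroup. Here one invokes Nagao's theorem, which presents $\SL_2(\FF_p[T])$ as the amalgamated product of $\SL_2(\FF_p)$ with the group $B'$ of upper-triangular matrices of $\SL_2(\FF_p[T])$ over their common subgroup $B$ of upper-triangular matrices of $\SL_2(\FF_p)$; since $[\SL_2(\FF_p):B]=p+1\ge 3$ and $[B':B]=\infty$, this amalgam is neither degenerate nor of infinite-dihedral type and therefore contains $F_2$. (Equivalently, one exhibits two loxodromic elements of $\SL_2(\FF_p[T])$ with disjoint axes on the Bruhat--Tits tree of $\SL_2(\FF_p((T^{-1})))$ and applies ping-pong.) This positive-characteristic case is the only genuinely delicate point: the naive transcription of the $\SL_2(\ZZ)$ ping-pong is unavailable because the unipotents of $\SL_2(\FF_p[T])$ have order $p$, and the free product of two such elements is the amenable infinite dihedral group when $p=2$, so passing through an action on a tree is essential and disposes of all $p$ uniformly.

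Finally, for $(i)\Rightarrow(ii)$ we argue contrapositively: suppose $\KK$ is algebraic over a finite field, so $\operatorname{char}\KK=p>0$ and every element of $R$ is algebraic over $\FF_p$. Enumerating $R=\{a_1,a_2,\dots\}$ and setting $R_m=\FF_p[a_1,\dots,a_m]$, each $R_m$ is a finitely generated $\FF_p$-algebra that is integral over $\FF_p$, hence a finite ring, and $R=\bigcup_m R_m$. Therefore $\GL_n(R)=\bigcup_m\GL_n(R_m)$ is an increasing union of finite groups, hence locally finite and in particular amenable; by the preliminary reduction $\SL_n(R)$ is amenable as well. This closes the chain of implications.
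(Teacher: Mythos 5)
Your proof is correct and follows essentially the same route as the paper: local finiteness (hence amenability) of $G$ when $\KK$ is algebraic over a finite field, elementary field theory for (ii)$\Leftrightarrow$(iii), and free subgroups of $\SL_2(\ZZ)$ or $\SL_2(\FF_p[T])$ inside $\SL_n(R)$ for non-amenability. The differences are cosmetic: you exhaust $R$ by finite subrings where the paper exhausts $\KK$ by finite subfields, you add an unnecessary (though correct) $\GL$/$\SL$ reduction, and you substantiate via Nagao's theorem the free subgroup of $\SL_2(\FF_p[T])$ that the paper simply cites as well known.
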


\begin{proof}
Assume  that $\KK$ is an algebraic extension of a finite field $\FF_q$.
Then $\KK = \bigcup_{m} \KK_m$
for an increasing family of finite extensions $\KK_m$ of $\FF_q$; 
hence, $\GL_n(\KK) = \bigcup_m \GL_n(\KK_m)$ is the
inductive limit of the finite and hence amenable groups $\GL_n(\KK_m)$; 
it follows that $\GL_n(\KK)$ is amenable and therefore $\GL_n(R)$ and $\SL_n(R)$ 
are amenable. This shows that (i) implies (ii).
\par

Assume that (ii) holds.
If the characteristic of $\KK$ is $0$, then $\KK$ contains $\QQ$ and hence $R$ contains
$\ZZ.$  So, we can assume that the characteristic of $\KK$ is $p>0.$ We claim 
 that $R$ contains  an element which is not algebraic over the prime field $\FF_p.$
Indeed, otherwise, every element in $R$ is algebraic over $\FF_p.$ 
As the set of  elements in   $\KK$  which are algebraic over  $\FF_p$ is a field,
 it would follow that  the field fraction field $\KK$ is algebraic over  $\FF_p.$ 
 This  contradiction shows that (ii) implies (iii).

Assume that (iii) holds.
Then $\SL_n(R)$ contains a copy of $\SL_2(\ZZ)$ or a copy of $\SL_2(\FF_p[T])$.
It  is well-known that both $\SL_2(\ZZ)$ and $\SL_2(\FF_p[T])$  contain a subgroup which is isomorphic
to the free group on two generators.
Therefore, $G$ is not amenable and so  (iii) implies (i).
\end{proof}

Let   $\GG$ be a  \emph{locally compact} group, with Haar measure $m.$
Recall that  the amenability of $\GG$
is characterized by the Hulanicki-Reiter theorem (see \cite[Theorem G.3.2]{BHV}):
$\GG$ is amenable if and only if the regular representation $(\lambda_{\GG}, L^2(\GG, m))$ 
weakly contains the trivial representation $1_{\GG}$, where $m$ is Haar measure on $\GG$;
when this is the  case, $\lambda_{\GG}$ weakly contains every  representation of $\GG$

The following result shows the amenability of  $\GG$ can be detected by the restriction 
of $\lambda_{\GG}$ to a dense subgroup; for a more general result, see 
\cite[Proposition~1]{GuivAsym} or \cite[Theorem 5.5]{Bekk--16}.

\begin{proposition}
\label{Pro-RestRegRep}
Let $\GG$ be a locally compact group and $G$ a countable dense subgroup of $\GG.$ 
Assume that the restriction to $G$  of the regular representation 
$\lambda_{\GG}$ of $\GG$  weakly contains the trivial representation $1_{G}$. 
Then $\GG$ is amenable.
\end{proposition}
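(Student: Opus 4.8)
The plan is to reduce the claim to the Hulanicki--Reiter characterization of amenability recalled above: it suffices to show that $\lambda_\GG$ weakly contains the trivial representation $1_\GG$ of $\GG$, i.e.\ that for every compact $Q\subseteq \GG$ and every $\eps>0$ there is a unit vector $\xi\in L^2(\GG,m)$ with $\norm{\lambda_\GG(g)\xi-\xi}<\eps$ for all $g\in Q$. The hypothesis gives almost-invariant vectors only for elements of the dense subgroup $G$, so the key technical point is a \emph{continuity/density} argument upgrading almost-invariance over $G$ to almost-invariance over $\GG$.

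First I would record the elementary fact that, for a fixed $\xi\in L^2(\GG,m)$, the map $g\mapsto \lambda_\GG(g)\xi$ is continuous from $\GG$ to $L^2(\GG,m)$ (strong continuity of the regular representation of a locally compact group). Hence the function $g\mapsto \norm{\lambda_\GG(g)\xi-\xi}$ is continuous on $\GG$. Next, by hypothesis $\lambda_\GG|_G$ weakly contains $1_G$, so for the given $\eps$ and any finite subset $F\subseteq G$ there is a unit vector $\xi$ with $\norm{\lambda_\GG(s)\xi-\xi}<\eps/3$ for all $s\in F$. The idea is to choose $F$ to be an $\eps$-net, with respect to the function just mentioned, that ``controls'' a prescribed compact set $Q$.

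The main step, then: let $Q\subseteq\GG$ be compact and $\eps>0$. Using strong continuity, each $g\in Q$ has an open neighbourhood $U_g$ on which $\norm{\lambda_\GG(\cdot)\xi-\xi}$ varies by less than $\eps/3$ \emph{for every} $\xi$ simultaneously --- more precisely $\norm{\lambda_\GG(x)\xi - \lambda_\GG(g)\xi}=\norm{\lambda_\GG(g^{-1}x)\xi-\xi}$, so it suffices that $g^{-1}x$ lies in a fixed neighbourhood $V$ of $e$ with $\sup_{\xi}\norm{\lambda_\GG(v)\xi-\xi}$ small for $v\in V$; but that supremum is not small, so instead one must be more careful: cover $Q$ by finitely many translates $g_1V,\dots,g_kV$ with $V$ a symmetric neighbourhood of $e$, pick $s_i\in G\cap g_iV$ (possible by density), set $F=\{s_1,\dots,s_k\}$, and choose a unit vector $\xi$ with $\norm{\lambda_\GG(s_i)\xi-\xi}<\eps/2$ for all $i$. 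For $x\in Q$, write $x\in g_iV$, so $x=g_iv$ and $s_i=g_iw$ with $v,w\in V$; then $s_i^{-1}x=w^{-1}v\in V^2$, and $\norm{\lambda_\GG(x)\xi-\xi}\le \norm{\lambda_\GG(s_i)(\lambda_\GG(s_i^{-1}x)\xi-\xi)}+\norm{\lambda_\GG(s_i)\xi-\xi}<\norm{\lambda_\GG(s_i^{-1}x)\xi-\xi}+\eps/2$. The surviving term $\norm{\lambda_\GG(s_i^{-1}x)\xi-\xi}$ with $s_i^{-1}x\in V^2$ is still not automatically small for this $\xi$, so the honest route is: choose $V$ \emph{first} depending on $\xi$ --- but $\xi$ depends on $F$ which depends on $V$, a circularity. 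The clean way out is the standard one: prove instead the equivalent statement that $1_\GG\prec\lambda_\GG$ by showing $1_G\prec\lambda_\GG|_G$ implies $1_\GG\prec\lambda_\GG$ \emph{on positive-definite functions} --- a normalized positive-definite function on $\GG$ associated to $\lambda_\GG$ that is close to $1$ on a finite subset of $G$ is, by continuity, close to $1$ on a neighbourhood of that subset, and one checks the constant function $1$ is a pointwise limit (uniformly on compacta) of such functions using that $G$ is dense. I expect the bookkeeping in this continuity-plus-density passage --- making the choice of neighbourhood and the choice of almost-invariant vector non-circular --- to be the only real obstacle; everything else is the Hulanicki--Reiter dictionary. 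One reference-free way to finish: since $\lambda_\GG|_G$ weakly contains $1_G$, the trivial representation $1_G$ is weakly contained in $\lambda_\GG|_G$, hence (inducing, or just restricting matrix coefficients) $1_\GG$ is weakly contained in $\lambda_\GG$ because the diagonal matrix coefficients $g\mapsto\langle\lambda_\GG(g)\xi,\xi\rangle$ of $\lambda_\GG$ that approximate $1$ on $G$ automatically approximate $1$ on all of $\GG$ by continuity and density, and the set of such coefficients is closed under the operations defining weak containment; then Hulanicki--Reiter gives that $\GG$ is amenable.
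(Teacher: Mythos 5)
You correctly isolate the difficulty --- almost invariance over the dense subgroup $G$ gives, for each unit vector $\xi$, control only at the points of $G$, and the neighbourhood on which the continuous function $g\mapsto\norm{\lambda_{\GG}(g)\xi-\xi}$ (equivalently, the positive-definite function $g\mapsto\langle\lambda_{\GG}(g)\xi,\xi\rangle$) stays close to its value at a given point depends on $\xi$ --- but your ``clean way out'' does not close that gap; it restates it. The final claim, that diagonal matrix coefficients of $\lambda_{\GG}$ which approximate $1$ on $G$ ``automatically approximate $1$ on all of $\GG$ by continuity and density,'' is not a valid principle: continuity of each individual coefficient provides no equicontinuity, so closeness to $1$ on a dense subgroup does not propagate to compact subsets of $\GG$. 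Indeed the analogous statement for a general unitary representation is false: take $\GG=\RRR$, $G=\QQ$ and the continuous normalized positive-definite functions $\varphi_n(x)=e^{2\pi i\, n!\, x}$ (diagonal coefficients of unitary characters); then $\varphi_n\to 1$ pointwise on $\QQ$, yet $\varphi_n$ does not tend to $1$ on $\RRR$, and $1_{\RRR}$ is not weakly contained in $\bigoplus_n \varphi_n$. Hence any correct proof must exploit something specific to the regular representation, which your argument never does; as written, the decisive step is a genuine gap, not bookkeeping.

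The paper's proof sidesteps the quantitative uniformity problem by passing to invariant means, where invariance is an exact, closed condition and density therefore suffices. From unit vectors $f_n\in L^2(\GG,m)$ almost invariant under $G$ one gets, via $\bigl| |f_n(g^{-1}x)|-|f_n(x)|\bigr|\le |f_n(g^{-1}x)-f_n(x)|$, that the nonnegative vectors $|f_n|$ are again almost $G$-invariant; the associated probability densities define means $M_n(f)=\int_{\GG} f\,\varphi_n\, dm$ on $UCB(\GG)$, and a weak-$*$ cluster point $M$ is $G$-invariant. Since for $f\in UCB(\GG)$ the orbit map $g\mapsto {}_gf$ is norm continuous and $M$ is norm continuous, the set of $g\in\GG$ leaving $M$ invariant is closed, contains the dense subgroup $G$, hence is all of $\GG$; so $\GG$ is amenable. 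Note that this uses the concrete realization of $\lambda_{\GG}$ on $L^2(\GG,m)$ (one may take absolute values and squares of the almost invariant vectors), precisely the extra structure that your positive-definite-function argument leaves unused.
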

\begin{proof}
By assumption, there exists a sequence $(f_n)_n$ in $ L^2(\GG, m)$ with $\Vert f_n\Vert=1$ such that
$$
\lim_{n} \Vert  \lambda_{\GG}(g) f_n-f_n\Vert =0 \tous g\in G.
$$
Then, since $\left| |f_n(g^{-1}x)|- |f_n(x)|\right|\leq  \left|f_n(g^{-1}x)-f_n(x)\right|$ for $g,x\in \GG,$ 
we have
$$
\lim_{n} \Vert  \lambda_{\GG}(g) |f_n|-|f_n|\Vert =0 \tous g\in G.\leqno{(*)}
$$
Set $\varphi_n:= \sqrt{|f_n|}.$ Then $\varphi_n\geq 0$ and $\int_{\GG} \varphi_n dm= 1.$ 
Every $\varphi_n$ defines a mean $M_n:f\mapsto \int_{\GG} f\varphi_n dm$ 
on $UCB(\GG)$.
 Let $M$ be  a limit of  $(M_n)_n$ for  the weak-*-topology on the dual space of $UBC(\GG).$
 It follows from $(*)$ that $M$ is invariant under $G.$ Since,
 for every $f \in UCB(\GG),$   the map 
 $$\GG\to UCB(\GG), \quad g\mapsto _gf$$
 is continuous (where $_gf$ denotes left translation by $g\in \GG$),
 it follows that $M$ is   invariant under $\GG.$
 Hence, $\GG$ is amenable.
 
 \subsection{Special linear groups over a subring of a field}
 \label{SS4}
We will  use  the following elementary lemma
about subgroups of $\SL_n(\KK)$ which stabilize a line in $\KK^n.$
\begin{lemma}
\label{Lem-InfiniteIndexSubring}
  For an infinite field $\KK$ and $n\geq 2,$ let $L$ be a subgroup of $\SL_n(\KK)$ which stabilizes a line
 $\ell$ in $\KK^n.$  Then $L\cap \SL_n(R)$  has infinite index in  $\SL_n(R)$
 for every infinite unital subring $R$ of $\KK.$
 \end{lemma}
 
 \begin{proof}
 Let $\{v_1, \dots, v_{n}\}$ be a basis of $\KK^{n}$ with $\ell= \KK{v_1}.$
Fix $i,j\in \{1,\dots, n\}$ with $i\neq j$ and, for $\lambda\in \KK,$ let 
$E_{ij}(\lambda)$ be the corresponding elementary matrix 
in $\SL_{n}(\KK),$ that is,
$$E_{ij}(\lambda)=  I_{n} + \lambda \Delta_{ij},$$
 where $\Delta_{ij}$ denotes the matrix with $1$ at the position $(i,j)$ and $0$ otherwise.

For every $l=1,\dots, n,$ let $\varphi_l:\KK\to \KK$  be  defined by 
$$
E_{ij}(\lambda)(v_1)= \sum_{i=1}^{n} \varphi_l(\lambda) v_i \quad \text{for} \quad \lambda\in \KK.
$$
Every  $\varphi_l$ is a polynomial function (in fact, an affine function)
on $\KK$ and, for  $l=2, \dots, n,$  we have 
$\varphi_l(\lambda)=0$ for every $\lambda\in \KK$ such that $E_{ij}(\lambda)\in L.$ 

Assume, by contradiction, that $L\cap \SL_n(R)$ has finite index in $\SL_{n}(R)$
for an infinite subring $R$ of $\KK.$ 
Then the subgroup 
  $$L_{i,j}(R):=L\cap \{E_{ij}(\lambda)\mid \lambda\in R\}$$ has finite index in 
the subgroup $\{E_{ij}(\lambda)\mid \lambda\in R\}$ of $SL_{n}(R).$
 In particular,  $L_{i,j}(R)$ is infinite. It follows that 
 $\varphi_l$ has infinitely many roots in $\KK$ and hence that $\varphi_l=0$, for every $l=2, \dots, n.$ 
 Therefore, every elementary 
matrix $E_{ij}(\lambda)$ fixes the line $\ell$, for  
 $i,j\in \{1,\dots, n\}$ and $\lambda\in \KK$.
Since $\SL_{n}(\KK)$ is generated by elementary matrices, 
it follows that every matrix in $\SL_{n}(\KK)$ fixes the line $\ell$; this of course is impossible.

 \end{proof}
\section{Proofs of Theorem~\ref{Theorem1} and Corollary~\ref{Cor-Theorem1}} 
\label{S2}

\subsection{Proof of Theorem~\ref{Theorem1}} 
\label{S2-SS1}
Let $X$ be a  tree, with $X^0$ the set of vertices   and $X^1$ the set of edges of $X.$
Let $\GG$ be  a locally compact group acting on $X$.

Julg and Valette constructed in \cite{JuVa--84} (see also \cite{Szwa--91} and \cite{Julg--15})
a remarkable family of  representations $(\pi_t)_{t\in [0, 1]}$ of $\GG$, all defined  on $\ell^2(X^0)$, with the following properties:
\begin{itemize}
\item[(i)]  $\pi_0$ is the natural  representation of $\GG$ on $\ell^2(X^0)$ and $\pi_1$ is equivalent to
$1_\GG\oplus \rho_1,$ where $\rho_1$ is the natural  representation of $\GG$ on $\ell^2(X^1);$ 
\item[(ii)]  for every $t\in [0, 1],$ there exists a bounded operator $T_t$ on $\ell^2(X^0)$
with inverse $T_t^{-1}$ defined on the subspace of functions of $X^0$ with finite support 
such that $\pi_t(g):=T_t^{-1} \pi_0(g) T_t$ extends to a unitary operator on $\ell^2(X^0)$
for every $g\in \GG$; so, a unitary representation $g\mapsto \pi_t(g)$  of $\GG$ is defined on  $\ell^2(X^0)$;
\item[(iii)]  $\pi_t(g)-\pi_0(g)$ is a finite-rank operator on $\ell^2(X^0),$ for every $t\in[0,1]$ and $g\in \GG;$
  \item[(iv)] we have 
  $$\langle \pi_t(g) T_t^{-1}\delta_x\mid T_t^{-1}\delta_y\rangle= t^{d(gx,y)},$$
   for every $t\in (0,1)$, $g\in \GG$ and $x,y\in X^0,$ where $d$ denotes the natural distance on $X^0;$
 \item[(v)] the map 
 $$[0,1]\to \RRR^+, \ t\mapsto  \Vert\pi_t(g)-\pi_0(g)\Vert $$  is continuous   for every $g\in \GG.$ 
\end{itemize}
(Our representation $\pi_t$ is  $g\mapsto U_{\lambda} \rho_\lambda(g) U_{t}^{-1}$
with $\lambda=-\log t,$  for the representation $\rho_\lambda$ and the operator $U_\lambda$ appearing in  \S 2 of  \cite{JuVa--84}.)

Let  $G$ be a countable group acting on $X$. Assume that
\begin{itemize}
\item[(a)] either $G$ is not amenable and the stabilizer in $G$ of every vertex of $X$ is finite or
\item[(b)] $X$ is locally finite and the closure of  the image of $G$ in $\Aut(X)$ is not amenable.
\end{itemize}
Set $\GG=G$ in case (a)  and  let $\GG$ be the closure of $G$ in $\Aut(X)$ in case (b).
 Let $(\pi_t)_{t\in [0, 1]}$ be the family of  representations of $\GG$ as above.

\vskip.2cm
$\bullet$ {\it First step.} For every $a\in C^*(G)$ and every $t\in[ 0,1],$ the operator $ \pi_t(a)-\pi_0(a)$ 
is compact and the map
 $$[0,1]\to \RRR^+, \ t\mapsto  \Vert\pi_t(a)-\pi_0(a)\Vert $$  is continuous.
  
 Indeed, this follows from  Properties (iii) and (v) of the family $(\pi_t)_{t}$ and from the fact that 
 $\CCC[G]$ is dense in $C^*(G).$
 
\vskip.2cm
$\bullet$ {\it Second step.} The restriction $\pi_0|_G$ of  $\pi_0$ to $G$  
does not weakly contain the trivial representation  $1_G.$

Indeed, the  representation $\pi_0$ of $\GG$  is  equivalent to the direct sum  $\oplus_{s\in T}  \lambda_{\GG/ \GG_s},$ where $S$ is a system
of representatives for the $\GG$-orbits in $X^0$
and $\GG_s$ is the stabilizer in $\GG$ of $s\in S$. 
Since $\GG_s$ is compact (and even finite in case (a)) and hence
amenable, $\lambda_{\GG/ \GG_s}=\Ind_{\GG_s} ^\GG 1_{\GG_s}$ is weakly contained in 
the regular representation $\lambda_{\GG}$ of $\GG$ and so  $\pi_0$
is weakly contained in $\lambda_{\GG}.$ Hence, $\pi_0$ does not weakly contain the trivial representation  $1_G$ 
in case (a). In case (b),  the claim follows from 
Proposition~\ref{Pro-RestRegRep}, since  $\GG$ is not amenable and $G$ is dense in $\GG.$ 

\vskip.2cm
$\bullet$ {\it Third step.} There exists an element 
$a\in C^*(G)$ and $0\leq t_0<1$ with the following properties:
$\pi_{t_0}(a)=0$,  $\pi_t(a)$ is a  non zero compact operator
for every $t\in(t_0, 1]$,  and the map 
$$[t_0,1] \to \RRR+, \ t\mapsto \Vert \pi_t(a)\Vert$$
is continuous.

Indeed,  by the second step, there exists $a\in C^*(G)$ such that $\pi_0(a)=0$
and $1_G(a)\neq 0$. Therefore, $\pi_1(a)\neq 0$ and  $ \pi_t(a)=\pi_t(a)-\pi_0(a)$
for every $t\in[0,1]$ and  so the claim follows from the first step.

\vskip.2cm
$\bullet$ {\it Fourth step.} Let $a\in C^*(G)$ and  $0\leq t_0<1$ be as in the third step.
 There exists an irreducible infinite dimensional subrepresentation
$\sigma_t$ of $\pi_t$ such that  $\sigma_t(a)$ is  a compact operator
and such that  $\Vert \sigma_t(a)\Vert=\Vert \pi_t(a)\Vert$ for every $t\in (t_0,1).$

Indeed, it  follows from the third step and Lemma~\ref{lem-IrredCompactOp}
that $\pi_t|_G$  contains an irreducible subrepresentation $\sigma_t$
 such that  $\sigma_t(a)$ is  a compact operator
with $\Vert \sigma_t(a)\Vert=\Vert \pi_t(a)\Vert$.
 It remains to show that $\sigma_t$ is infinite dimensional
 for every $t\in(t_0,1).$
 
 Assume, by contradiction, $\sigma_t$ is finite dimensional for some $t\in(t_0,1).$
 Since $G$ is dense in $\GG,$ the closed  subspace $\K_t$ of $\ell^2(X^0)$
 defining $\sigma_t$ is invariant under $\GG$ and so $\sigma_t$ is the restriction to $G$
of a dimensional subrepresentation of $\pi_t$, again denoted by  $\sigma_t$.
On the one hand, $\GG$ acts properly on $X^0$, since  the stabilizers of vertices 
are compact (and even finite in case (a)). So, we have 
$$\lim_{g\to +\infty: g\in \GG} d(gx,x)=0 \tous x\in X^0.$$  It follows 
from Property (iv) of the family $(\pi_t)_{t}$ that 
$\pi_t$ (and hence $\sigma_t$) is a $C_0$-representation, that is, 
$$
\lim_{g\to +\infty: g\in \GG}\langle \pi_t(g)v\mid w\rangle= 0
$$
for every $v,w\in \ell^2(X^0)$.
On the other hand, since $\sigma_t$ is  finite dimensional,
$\sigma_t\otimes\overline{\sigma_t}$ contains $1_{\GG}$.
As $\GG$ is not compact, this is a contradiction to the fact that 
$\sigma_t$ is a $C_0$-representation.

\vskip.1cm

\vskip.2cm
$\bullet$ {\it Fifth step.} There exists uncountably many real numbers
$t\in(t_0,1)$ 
such that the  subrepresentations
$\sigma_t$ of $\pi_t|_G$ as in the fourth step are pairwise non weakly equivalent.

 Indeed, by the third step, the function $f:t\mapsto \Vert \pi_t(a)\Vert$
 is continuous on $[t_0,1]$, with $f(t_0)=0$ and $f(1)>0.$
 So, the range of $f$ contains a whole interval. 
 Let  $t,s\in (t_0,1)$ be such that  $f(t)\neq f(s).$ Then  
 $$\Vert \sigma_t(a)\Vert=\Vert \pi_t(a)\Vert =f(t)\neq f(s) =  \Vert \pi_s(a)\Vert=\Vert \sigma_s(a)\Vert,$$
 and so $\sigma_t$ and $\sigma_s$ are not weakly equivalent.

\subsection{Proof of Corollary~\ref{Cor-Theorem1}}
\label{Section-Proofs-Cor-Theo1-2}
The following remarks show how Corollary~\ref{Cor-Theorem1}
follows from Theorem~\ref{Theorem1}. 

\medskip 
\n
(i) Let $\KK$ be  global field $\KK$.
Choose a non trivial  discrete valuation  $v:\KK^*\to \ZZ$.
The completion  of $\KK$ at $v$ is a non archimedean local field $\KK_v$. 
The  tree  $X_v$ associated to $v$  (see Chapter II in \cite{Serr--80}) is a locally finite regular graph.
The group $G=\GL_2(\KK)$  acts as a group of automorphisms of $X_v,$
with vertex stabilizers conjugate to $\GL_2(\O_v\cap \KK),$ where $\O_v$ is the compact subring of the integers  in $\KK_v.$
The closure of the image of $G$ in $\Aut(X_v)$ coincides with $\PGL_2(\KK_v)$
and is therefore non amenable. A similar remark applies to $G=\SL_2(\KK).$

\medskip 
\n
(ii)  As is well-known, the group  $G=\SL_2(\ZZ)$ is an amalgamated product $\ZZ/4\ZZ\ast_{\ZZ/2\ZZ}\ZZ/6\ZZ.$
It follows that $G$ acts on a tree with vertices of valence 2 or 3 
with vertex stabilizers of order $4$ or $6$ (see Chapter I, Examples 4.2. in \cite{Serr--80})

\medskip 
\noindent
(iii) The  free  non abelian group $F_2$ acts freely on its Cayley graph $X,$ which is 
a $4$-regular tree. It follows that $F_n$ acts freely on $X$ for  every $n\in \{2,\dots, +\infty\}.$ 
Observe that, in this case, the representations $\pi_t$ and $\sigma_t$ as in the proof of Theorem~\ref{Theorem1}
are faithful for $t\neq 1$  (since there are even $C_0$-representations).
\end{proof}

\section{Proofs of Theorem~\ref{Theorem2} and Corollary~\ref{Cor-Theorem2}}
\label{S3}
\subsection{Proof of Theorem~\ref{Theorem2}}
\label{SS-Proofs-Theo1}
Let $R$ be a countable unital commutative ring which is an integral domain and
 $\KK$ its  field of fractions.
 In case the characteristic of $\KK$ is positive, assume that $\KK$ is not  an algebraic extension of  its prime field.

Let $n\geq 3$ and $G= \GL_n(R)$. 
We consider the   natural action 
of $G$ on the projective space $\PP(\KK^n).$
Let $\ell_0=\KK e_1\in \PP(\KK^n)$ be the line defined by the first unit vector $e_1$ in $\KK^n.$
The stabilizer of $\ell_0$ in $G$ is
$$H= \begin{pmatrix} R^\times & R^{n-1}\\ 0 & GL_{n-1}(R) \end{pmatrix}.$$
Let $\sigma$ be a finite dimensional  representation of $H$
and $\pi:= \Ind_H ^G \sigma$. 
We claim that $\pi$ is irreducible and that $\pi(C^*(G))$ contains a non zero compact operator.
For the proof of this claim, we have to treat separately the cases $n=3$ and $n\geq 4.$

\subsubsection{\textbf{Case $n=3$}}
$\bullet$ {\it First  step.} We claim that $gHg^{-1}\cap H$ is amenable,  for every $g\in G\setminus H.$ 

Indeed, let $g\in G\setminus H.$
 Then $\ell_0$ and $g\ell_0$  are distinct lines in $\KK^n$  and are both stabilized by $gHg^{-1}\cap H$.
 Hence,  $gHg^{-1}\cap H$  is isomorphic to a subgroup of the solvable group
$$ \begin{pmatrix} \KK^* & 0& \KK \\ 0& \KK^*& \KK \\0 & 0&\KK^* \end{pmatrix}$$
and is therefore amenable.

\vskip.2cm
$\bullet$ {\it Second  step.} We claim that the representation $\pi$ is irreducible.

Indeed, in view of  Theorem~\ref{Theo-Induced}, we have to show that 
$\Comm_G(H)=H.$  Let $g\in G\setminus H.$
On the one hand, $gHg^{-1}\cap H$  is amenable, by the first step. 
 On the other hand,  $H$ is non amenable, by Proposition~\ref{Pro-AmenableSLn}. 
 This  implies that $gHg^{-1}\cap H$ is not of finite index in $H$ and so  $g$ is not in
the commensurator of $H$ in $G.$ 

\vskip.2cm
$\bullet$ {\it Third step.}  We claim that the $C^*$-algebra $\pi(C^*(G))$  contains a non-zero compact operator. 

Indeed, let $S$ be a system of representatives for the double cosets space $H\backslash G/H$
with $e\in S.$ 
 By Proposition~\ref{Prop-Induced}, the restriction  $\pi|_H$ of $\pi$  to $H$  is equivalent to the direct sum
$$
\bigoplus_{s\in S} \Ind_{s^{-1}H s\cap H}^H (\sigma^s|_{s^{-1}H s\cap H}) = \sigma\oplus \bigoplus_{s\in S\setminus\{e\}} \Ind_{s^{-1}H s\cap H}^H (\sigma^s|_{s^{-1}H s\cap H})
$$
Let $s\in S\setminus\{e\}.$ By the first step, $s^{-1}H s\cap H$ is amenable
and hence $\sigma^s|_{s^{-1}H s\cap H} $ is weakly contained in the regular representation $\lambda_{s^{-1}H s\cap H}$ of ${s^{-1}H s\cap H}$, by the Hulanicki-Reiter theorem. By continuity of induction (see \cite[Theorem F.3.5]{BHV}), it follows that 
$\Ind_{s^{-1}H s\cap H}^H (\sigma^s|_{s^{-1}H s\cap H})$ is weakly contained in
 the regular representation $\lambda_H$  of $H$.  Therefore, 
$$\pi_0:=\bigoplus_{s\in S\setminus\{e\}} \Ind_{s^{-1}H s\cap H}^H (\sigma^s|_{s^{-1}H s\cap H})$$
is weakly contained in $\lambda_H$. It follows that  
$\pi_0$ does not weakly contain $\sigma;$ indeed, assume by contradiction that 
$\sigma$ is weakly contained in $\pi_0.$ Then  
$\lambda_H\otimes \overline{\lambda_H},$ which is a multiple of $ \lambda_H,$
 weakly contains $\sigma\otimes \overline{\sigma}$. However, since  $\sigma$ is finite dimensional,
 $\sigma\otimes \overline{\sigma}$ contains $1_H$. Hence, $1_H$  is weakly contained in $ \lambda_H$
 and this is a contradiction to the non amenability of $H.$ 
 
 It follows from Lemma~\ref{lem-CompactOp} that $\pi(C^* (H))$ contains a non-zero compact 
 operator. Since $C^* (H)$ can be viewed a subalgebra of $C^* (G)$, the claim is proved
 for $G=\GL_3(R)$. 
 
\subsubsection{\textbf{Case $n\geq 4$}}

For every unital  subring $R'$ of $R,$ set
$$
L(R'):=\begin{pmatrix} 1& 0\\ 0 & \SL_{n-1}(R') \end{pmatrix},
$$
which is a subgroup of $H$  isomorphic to $SL_{n-1}(R').$

\vskip.2cm
$\bullet$ {\it First step.} Let $g_0\in G\setminus H$ and $R'$ an infinite unital subring of $R.$
We claim that $g_0Hg_0^{-1}\cap L(R')$ has infinite index in $L(R')$.

Indeed, the group $L:=g_0Hg_0^{-1}\cap L(R')$ stabilizes the two lines $\ell_0$ and $g_0\ell_0.$ Let  
 $V$ be the linear span of the $n-1$ unit vectors $e_2,\dots, e_{n}$. Denote 
 by $\ell$ the projection on $V$ of the line $g_0\ell_0$, parallel to $\ell_0.$ 
 As $g\ell_0\neq \ell_0$, we have $\ell\neq\{0\}$. Moreover, $L$
 stabilizes $\ell$, since $L$ stabilizes $\ell_0$ and $V$.
 So,  identifying $L(R')$ with the  group $ \SL_{n-1}(R'),$ 
 we see can view $L$ as a subgroup of   $ \SL_{n-1}(\KK)$
 which stabilizes a line in $\KK^{n-1}.$ 
 Lemma~\ref{Lem-InfiniteIndexSubring} shows that $L$ has infinite
index in $\SL_{n-1}(R')$, as claimed.

\vskip.2cm
$\bullet$ {\it Second step.}  We claim that the representation $\pi$ is irreducible.
In view of Theorem~\ref{Theo-Induced}, it suffices to show that $\Comm_G(H)=H.$

 Let $g_0\in G\setminus H$.  By the first step, $g_0Hg_0^{-1}\cap L(R)$ has infinite index in $L(R)$; hence,
  $g_0Hg_0^{-1}\cap H$  has infinite index in $H$, since $L(R)$ is a subgroup of $H.$

\vskip.2cm
$\bullet$ {\it Third step.} We claim that $\pi(C^*(G))$  contains a non-zero compact operator.

Indeed,  since $\KK$ is not  an algebraic extension over its prime field,
$R$ contains a subring $R'$ which is a copy $\ZZ$ or a copy of the polynomial ring $\FF_p[T]$, by Proposition~\ref{Pro-AmenableSLn}. The corresponding subgroup 
$$L:=L(R')$$
 of $G$ is  isomorphic to $\SL_{n-1}(\ZZ)$ or $\SL_{n-1}(\FF_p[T])$. 
Observe that $L$ is a lattice in the locally group $\GG=SL_{n-1}(\RRR)$ or 
 $\GG=SL_{n-1}(\FF_p((T^{-1})))$, 
where $\FF_p((T^{-1}))$ is the local field of Laurent series over $\FF_p$.
Since $n-1\geq 3,$  the group $\GG$ and hence $L$  has Kazhdan's Property (T); see \cite[\S. 1.4, 1.7]{BHV}.

Let $S$ be a system of representatives for the double cosets space $H\backslash G/H$
with $e\in S.$ 
 By Proposition~\ref{Prop-Induced}, the restriction  $\pi|_{L}$ to $L$ of $\pi$  is equivalent to the direct sum
 $\sigma|_L\oplus \pi_0,$ where 
 $$\pi_0:=\bigoplus_{s\in S\setminus\{e\}} \Ind_{s^{-1}H s\cap L}^L (\sigma^s|_{s^{-1}H s\cap L}).$$
 We claim that  $\pi_0$ does not weakly contain $\sigma|_L.$
Indeed, assume by contradiction that $\pi_0$  weakly contains $\sigma|_L.$
Since $\sigma$ is finite dimensional and $L$ has Property (T),  it follows that 
$\pi_0$ contains   $\sigma|_L$ (see \cite[Theorem 1.2.5]{BHV}).
Therefore, $\Ind_{s^{-1}H s\cap L}^L  (\sigma^s|_{s^{-1}H s\cap L})$ contains a  subrepresentation
of $\sigma|_L$ for some $s\in S\setminus\{e\}.$ 
Hence,  $s^{-1}H s\cap L$ has finite index in $L$, by Proposition~\ref{Prop-Induced-SubRep}.
Since $L=L(R')$ for an infinite unital subring $R'$ of $R,$
this is a contradiction to the first step.

As in the proof for the case $n=3,$ we conclude that $\pi(C^*(G))$  contains a non-zero compact operator.

This proves Theorem~\ref{Theorem2} for $G= \GL_n(R)$ when $n\geq 3.$
 The case $G=\SL_n(R)$ is proved in exactly the same way.
 
 \subsection{Proof of Corollary~\ref{Cor-Theorem2}}
 \label{SS-Proofs-Cor-Theo2}

 For $n\geq 3,$ let  $G=GL_n(R)$ for a ring $R$ as above.
 The irreducible  traceable representations of $G$ constructed in the proof of  Theorem~\ref{Theorem2}
 are of the form $\pi= \Ind_H ^G \sigma$ for a  finite dimensional  representation of 
 the subgroup   $H= \begin{pmatrix} R^\times & R^{n-1} \\ 0 & GL_{n-1}(R) \end{pmatrix}$.
 Observe that, $\pi= \Ind_H ^G \sigma$ is trivial on the center $Z$ of $G,$ since $H$ contains $Z.$

By Theorem~\ref{Theo-Induced},   there are infinitely (respectively, uncountably)  many non equivalent such representations $\pi$, provided there exists infinitely (respectively, uncountably)  non equivalent  finite dimensional irreducible representations of $H.$
This will be the case if $GL_{n-1}(R) \ltimes R^{n-1},$ which  is a quotient of $H$,  has infinitely (respectively, uncountably)  many non equivalent   finite dimensional  irreducible representations.

\medskip
\noindent 
 (i) 
 Assume that $R=\KK$ is an infinite field. 
It is easy to show that the finite dimensional irreducible  representations of $GL_{n-1}(\KK) \ltimes \KK^{n-1}$ 
are all of the form 
$$
 \begin{pmatrix} * & *\\ 0 & A \end{pmatrix} \mapsto \chi(\det A), \qquad A\in \GL_{n-1}(\KK)
 $$
 for some $\chi$ in the unitary dual $\widehat{\KK^*}$ of $\KK^*$;
 as $\KK^*$ is infinite, $\widehat{\KK^*}$ is a compact infinite group and is therefore uncountable. 
 
\medskip
\noindent 
 (ii)  Assume that  $R=\ZZ$. 
 
  \noindent
$\bullet$ {\it Case $n=3.$}  The  free group $F_2$
is a subgroup of finite index in  $\GL_2(\ZZ)$. There exists uncountably many 
 unitary characters  (that is one-dimensional unitary representations)  of $F_2$.
For every such unitary character   $\chi$, the representation $\Ind_{F_2}^{\GL_2(\ZZ)}\chi$ 
is finite dimensional and so has a decomposition  $\oplus_{i} \sigma_i^{(\chi)}$
 as a direct sum of  finite  dimensional   irreducible  representations $ \sigma_i^{(\chi)}$ of $\GL_2(\ZZ)$. 
One can choose uncountably many   pairwise non equivalent   
representations among the $\sigma_i^{(\chi)}$'s and  we obtain in this way
uncountably many non equivalent  finite dimensional irreducible representations of $\GL_2(\ZZ)$
and hence of $\GL_{2}(\ZZ)\ltimes \ZZ^{2}.$

\noindent
$\bullet$ {\it Case $n\geq 4.$}  The   group $\GL_{n-1}(\ZZ)\ltimes \ZZ^{n-1}$
 has Kazhdan's property (T)  and so has at most  countably many 
 non equivalent finite dimensional  representations (see \cite[Theorem 2.1]{Wang}).
 There are indeed \emph{infinitely}  many  such representations: 
  for every  integer $N\geq 1,$ the finite group  
  $$G_N=\GL_{n-1}(\ZZ/N\ZZ))\ltimes (\ZZ/N\ZZ)^{n-1}$$
  is a quotient of $\GL_{n-1}(\ZZ)\ltimes \ZZ^{n-1}$;
infinitely many  representations among the irreducible representations of the $G_N$'s
 are  pairwise non equivalent  when viewed as representations of  $\GL_{n-1}(\ZZ)\ltimes \ZZ^{n-1}$.

\end{document}